\makeatletter \@addtoreset{equation}{section} \makeatother
\newcommand{\eref}[1]{(\ref{#1})}
\newcommand{\tref}[1]{Theorem \ref{#1}}
\newcommand{\pref}[1]{Proposition \ref{#1}}
\newcommand{\sref}[1]{Section \ref{#1}}
\newcommand{\jump}[1]{[\![#1]\!]}
\theoremstyle{plain} \newtheorem{thm}{Theorem}[section] \newtheorem{lem}{Lemma}[section] \newtheorem{prop}{Proposition}[section] 
\theoremstyle{definition} \newtheorem{rem}{Remark}[section] \newtheorem{defi}{Definition}[section]
\title[Signorini-type and Tresca friction conditions]{Unique solvability of a crack problem with Signorini-type and Tresca friction conditions in a linearized elastodynamic body}
\author[Hiromichi Itou]{Hiromichi Itou}
\address{Department of Mathematics, Tokyo University of Science, 1-3 Kagurazaka, Shinjuku-ku, Tokyo 162-8601, Japan}
\email{h-itou@rs.tus.ac.jp}
\author[Takahito Kashiwabara]{Takahito Kashiwabara}
\address{Graduate School of Mathematical Sciences, The University of Tokyo, 3-8-1 Komaba, Meguro-ku, Tokyo 153-8914, Japan}
\email{tkashiwa@ms.u-tokyo.ac.jp}
\date{\today}
\subjclass[2020]{35L85, 74M10, 74M15}
\keywords{Dynamic linear elasticity, Signorini contact condition of dynamic type, Tresca friction condition}
\begin{document}
\begin{abstract}
	We consider dynamic motion of a linearized elastic body with a crack subject to a modified contact law, which we call \emph{the Signorini contact condition of dynamic type}, and to the Tresca friction condition.
	Whereas the modified contact law involves both displacement and velocity, it formally includes the usual non-penetration condition as a special case.
	We prove that there exists a unique strong solution to this model.
	It is remarkable that not only existence but also uniqueness is obtained and that no viscosity term that serves as a parabolic regularization is added in our model.
\end{abstract}
\maketitle

\section{Introduction}
Analysis of crack motion is one of the most important topics in fracture mechanics and it has also attracted much attention in material science or in seismology (e.g.\ \cite{Bro1999, Fre1990, UMB2014}).
However, at least from mathematical point of view, it is far from being understood because of highly nonlinear and singular behavior of cracks.
Even if we put aside problems regarding crack propagation, which are difficult even at the stage of modeling and will not be addressed in this paper, there still remain many mathematical difficulties as explained below.

In the static case, one of the basic models is linear elasticity with interfacial conditions representing the non-penetration contact law (also known as the Signorini condition) and the Coulomb friction law on the crack (see \cite{IKT2011, KhlKov2000}).
Its dynamical version, however, becomes much more difficult and no mathematical results on this problem seem to have been obtained.
For related problems, in which some conditions mentioned above are modified or simplified, there are several known studies.

First, for the wave equation with the Signorini condition, unique solvability is established for the halfspace in \cite{LebSch1984}.
Existence of a weak solution for general domains is proved by \cite{Kim1989}, but uniqueness remains open.
Generalization of these results to the linear elasticity equations are also unsolved.
If the Kelvin--Voigt viscoelastic model, in which a term serving as parabolic regularization is added to linear elasticity, is considered instead, then existence of a weak solution is obtained by e.g.\ \cite{EJK2005, Tani2020} and references therein.
If the contact law is furthermore modified in such a way that the Signorini condition is imposed on velocity rather than on displacement, then uniqueness of a weak solution is shown as well (see \cite[Section 4.4.2]{EJK2005}).

Second, dynamic friction problems also exhibit a difficulty.
In case of the Tresca friction law, where the threshold parameter of the tangential traction is a given function $g$, \cite{DuvLio1976} established unique solvability of the linear elasticity equations (without contact conditions) under the assumption that $g$ does not depend on the time variable. This result was extended to the time-dependent $g$ in our previous paper \cite{ItoKas2021}.
If the Coulomb friction law which is considered to be more realistic but is more complex is employed, \cite[Chapter 5]{EJK2005} proves existence of a solution to the Kelvin--Voigt viscoelastic model combined with the Signorini condition in velocity.
In the context of crack problems, \cite{CocSca2006, Tani2020} constructed a weak solution of the Kelvin--Voigt viscoelastic model with the Signorini condition in displacement and with the nonlocal (approximated) Coulomb friction law.

Namely, when the contact condition is imposed on displacement and is combined with some friction law, only existence of a solution is established in the presence of viscosity terms.
In view of such a situation, one would like to mathematically explore a dynamic elasticity model with contact and friction having the following properties:
\begin{enumerate}[label=(\roman{*})]
	\item classical linear elasticity is exploited without viscosity;
	\item not only existence but also uniqueness of a solution is ensured;
	\item contact law is formulated in terms of displacement, which is considered to be more realistic.
\end{enumerate}
In this paper, we propose to impose a contact condition to linear combination of normal displacement and normal velocity on the interface with some constant coefficient $\delta > 0$; see \eref{eq: modified Signorini}$_1$ below.
Since $\delta = 0$ and $\delta = \infty$ correspond to the contact conditions in displacement and in velocity, respectively, it can be regarded as an intermediate between them.
We call (2.2)$_1$ \emph{the Signorini contact condition of dynamic type} (hereinafter, referred to as \emph{SCD condition}).
With the SCD and Tresca friction conditions, we prove unique existence of a strong solution for the linear elastodyanmic equations, thus having properties (i) and (ii).
Moreover, property (iii) is also approached by our model because $\delta > 0$ can be fixed to an arbitrarily small value (however it is not possible to make exactly $\delta = 0$).

An expository interpretation of our result may be that making the Signorini contact condition in displacement ``dynamic a bit'' (recall that boundary conditions having quantities with time derivative are called dynamic) leads to some stabilization effect to the system.
We expect that this fact has some connection with Baumgarte-like stabilization techniques known in numerical simulations of non-smooth mechanics (see \cite{KikBro2017}), which is to be investigated in future.
The present result will also be of basic interest when we make an attempt to more involved crack problems, e.g., propagation and singular behavior of crack tips.

This paper is organized as follows.
In \sref{sec2}, we introduce notation and precise mathematical setting to be studied.
In \sref{sec3}, a variational inequality formulation as well as the definition of a strong solution is introduced, and we present the main theorem.
\sref{sec4} is devoted to its proof based on regularization of a variational inequality and Galerkin's method.
The strategy basically follows our previous study \cite{ItoKas2021}; nevertheless, the analysis, in particular a priori estimates and a uniqueness proof, becomes more intricate to deal with the contact condition.

\section{Preliminaries} \label{sec2}
\subsection{Notation}
Let $\Omega \subset \mathbb R^3$ be a bounded domain with a smooth boundary $\partial\Omega$ consisting of two parts $\Gamma_D \neq \emptyset$ and $\Gamma_N$ which are mutually disjoint.
Let $\Gamma$ be a two-dimensional closed smooth interface which separates $\Omega$ into two subdomains $\Omega_{\pm}$, that is,
\begin{equation*}
	\Omega = \Omega_+ \cup \Omega_-, \quad \Gamma = \overline\Omega_+ \cap \overline\Omega_-.
\end{equation*}
We assume that $\partial\Omega_{\pm}$ satisfy the Lipschitz condition and that $\partial\Omega_{\pm} \cap \Gamma_D \neq \emptyset$.
A crack is supposed to be represented by an open subset $\Gamma_c$ of $\Gamma$ such that $\overline\Gamma_c \subset \Gamma \setminus\partial\Gamma$ (namely, $\Gamma_c \Subset \Gamma$); we refer to $\Omega_c := \Omega \setminus \overline\Gamma_c$ as \emph{the domain with a crack}.
The unit normal vector associated to $\partial\Omega$ is denoted by $\bm n_{\partial\Omega}$, and the unit normal vector on $\Gamma$ pointing from $\Omega_-$ to $\Omega_+$ is denoted by $\bm n$.
The geometric situation explained so far is schematically summarized in Figure \ref{fig1}.

\begin{figure}[htbp]
	\centering
	\includegraphics[width=8cm]{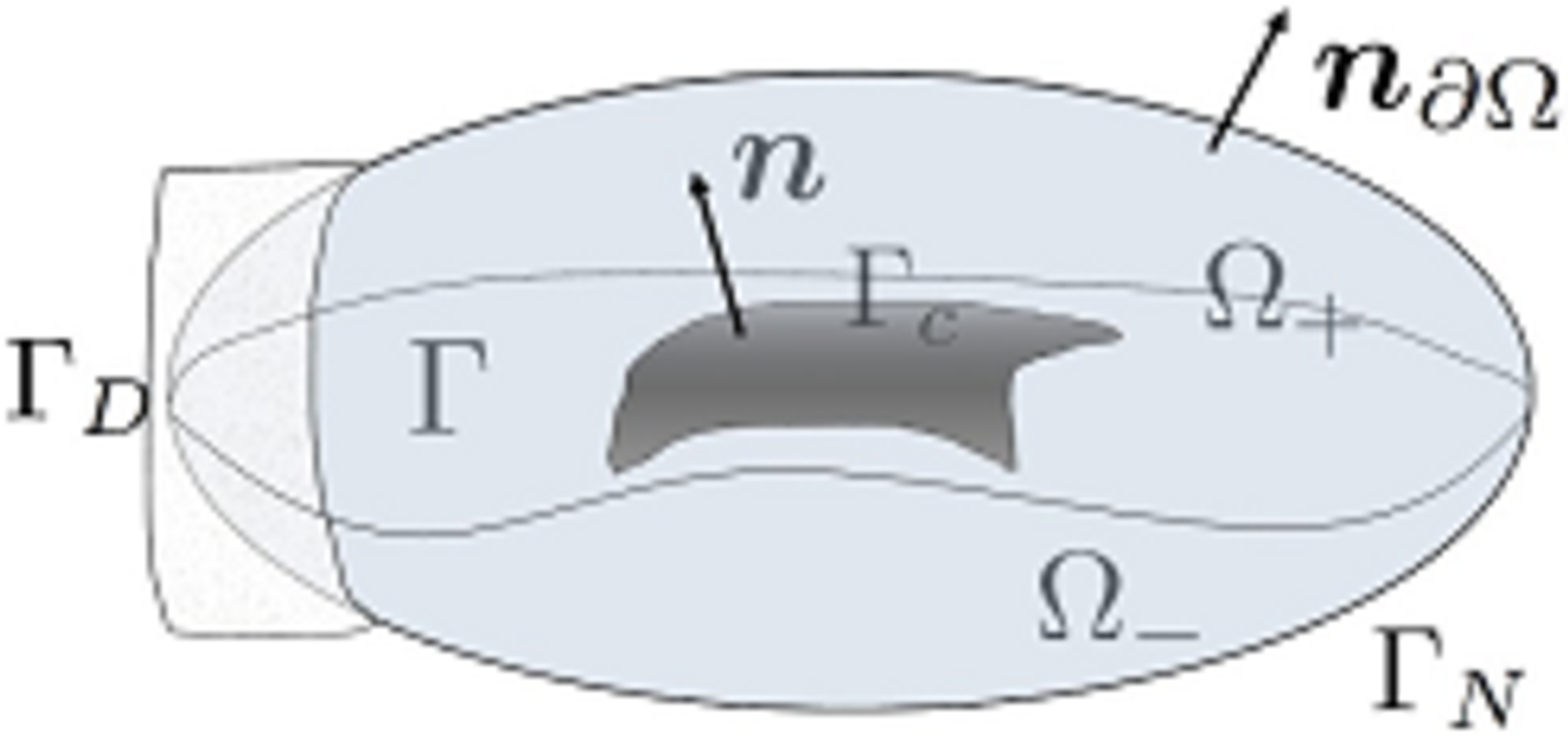}
	\caption{domain with a crack}
	\label{fig1}
\end{figure}

We mainly deal with functions defined in $\Omega_c$ in this paper.
For such a function $u$, we let $u^{\pm} := u|_{\Omega_{\pm}}$ be its restrictions to subdomains $\Omega_{\pm}$.
If $u^{\pm}$ are smooth enough, we define the jump discontinuity of $u$ across $\Gamma$ by
\begin{equation*}
	\jump{u} := u^{+}|_\Gamma - u^{-}|_\Gamma,
\end{equation*}
and that of $\nabla u$ by $\jump{\nabla u} := (\nabla u^{+})|_\Gamma - (\nabla u^{-})|_\Gamma$.

For function spaces, we employ the usual Lebesgue spaces $L^p(\Omega_c) \, (1\le p\le \infty)$ and the Sobolev space $H^1(\Omega_c)$, which have the characterization
\begin{align*}
	L^p(\Omega_c) &= L^p(\Omega_{+}) \times L^p(\Omega_{-}), \\
	H^1(\Omega_c) &= \{ (u^{+}, u^{-}) \in H^1(\Omega_{+}) \times H^1(\Omega_{-}) : \jump{u} = 0 \text{ on } \Gamma\setminus\Gamma_c \}.
\end{align*}
Accordingly, their norms are given by $\|u\|_{L^p(\Omega_c)} := (\|u^{+}\|_{L^p(\Omega_{+})}^p + \|u^{-}\|_{L^p(\Omega_{-})}^p)^{1/p}$ and $\|u\|_{H^1(\Omega_c)} := (\|u^{+}\|_{H^1(\Omega_{+})}^2 + \|u^{-}\|_{H^1(\Omega_{-})}^2)^{1/2}$.
Note in particular that if $u \in H^1(\Omega_c)$ then $\jump{u} \in H^{1/2}_{00}(\Gamma_c)$, which is the Lions--Magenes space (see \cite{LioMag1972}).

Functions and function spaces which are vector- or tensor-valued are written with bold fonts, e.g., $\bm u \in \bm H^1(\Omega_c) = H^1(\Omega_c)^3$, whereas fine fonts mean scalar quantities.
We denote the inner products of $L^2(\Omega_c)$ by $(\cdot, \cdot)$, and those of $L^2(\Gamma_N), L^2(\Gamma_c)$ by $(\cdot, \cdot)_{\Gamma_N}, (\cdot, \cdot)_{\Gamma_c}$ (the same notation will also be used for vectors and tensors).
We also exploit the notation of Bochner spaces $L^p(0, T; X)$ and $W^{k, p}(0, T; X)$ for a positive constant $T$ and a Banach space $X$, where $k>0$ is an integer and $1\le p\le \infty$.
Finally, the dual space of $X$ is denoted by $X^*$.

\subsection{Problem formulation}
We assume that $\Omega_c$ is regarded as a reference configuration (or non-deformed state) of an elastic body.
The deformation of the body may be described by a displacement field $\bm u : (0, T) \times \Omega_c \to \mathbb R^3$.
If the constitutive law of the material is based on isotropic linear elasticity, the stress tensor is given by
\begin{equation} \label{eq: constitutive law}
	\bm\sigma(\bm u) = (\lambda \operatorname{div} \bm u) \mathbb I + 2\mu \, \mathbb E(\bm u),
\end{equation}
where $\lambda, \mu$ are Lam\'e constants such that $\mu > 0$ and $3\lambda + 2\mu > 0$, $\mathbb I$ is the unit tensor, and $\mathbb E(\bm u) = (\nabla\bm u + (\nabla \bm u)^\top)/2$ means the linearized strain tensor.
The dynamic deformation of the body is governed by the hyperbolic system
\begin{equation*}
	\rho \bm u'' - \operatorname{div} \bm \sigma(\bm u) = \rho \bm f \quad\text{in}\quad (0, T) \times \Omega_c,
\end{equation*}
where $\rho$ is the density which is a positive constant, the prime stands for the time derivative (i.e., $\bm u'' = \partial_t^2 \bm u$), $\bm f$ is the external body force, and $T > 0$ stands for a fixed time length.
As for the boundary conditions, we consider 
\begin{align*}
	\bm u &= \bm 0 \quad\text{on}\quad (0, T) \times \Gamma_D, \\
	\bm\sigma(\bm u) \bm n_{\partial\Omega} &= \bm F \quad\text{on}\quad (0, T) \times \Gamma_N,
\end{align*}
where $\bm F$ is a prescribed traction on $\Gamma_N$.
At $t = 0$, the initial displacement and velocity fields are given as
\begin{equation*}
	\bm u(0) = \bm u_0, \quad \bm u'(0) = \dot{\bm u}_0 \quad\text{on}\quad \{0\}\times\Omega_c.
\end{equation*}

Before stating the interface conditions on the crack, we introduce the normal and tangential components of the displacement, velocity, and traction on $\Gamma$, restricted from $\Omega_{\pm}$, by
\begin{align*}
	u^{\pm}_n &= \bm u^{\pm} \cdot \bm n, \qquad \bm u^{\pm}_\tau = \bm u^{\pm} - u^{\pm}_n\bm n, \qquad
	u'^{\pm}_n = \bm u'^{\pm} \cdot \bm n, \qquad \bm u'^{\pm}_\tau = \bm u'^{\pm} - u'^{\pm}_n\bm n, \\
	\sigma^{\pm}_n &= \bm\sigma(\bm u^{\pm}) \bm n \cdot \bm n, \qquad \bm\sigma^{\pm}_\tau = \bm\sigma(\bm u^{\pm}) \bm n - \sigma_n^{\pm} \bm n,
\end{align*}
together with their jumps
\begin{align*}
	&\jump{u_n} = u^{+}_n - u^{-}_n, \quad \jump{\bm u_\tau} = \bm u^{+}_\tau - \bm u^{-}_\tau, \quad
	\jump{u'_n} = u'^{+}_n - u'^{-}_n, \quad \jump{\bm u'_\tau} = \bm u'^{+}_\tau - \bm u'^{-}_\tau, \\
	&\jump{\sigma_n} = \jump{\sigma_n(\bm u)} = \sigma^{+}_n - \sigma^{-}_n, \quad \jump{\bm\sigma_\tau} = \jump{\bm\sigma_\tau(\bm u)} = \bm\sigma^{+}_\tau - \bm\sigma^{-}_\tau.
\end{align*}
In this paper, we consider the Signorini contact condition of dynamic type (SCD condition) and Tresca friction condition on the crack $\Gamma_c$ as follows:
\begin{equation} \label{eq: modified Signorini}
\begin{aligned}
	\jump{\sigma_n} &= 0, \quad \sigma_n \le 0, \quad \jump{u_n + \delta u_n'} \ge 0, \quad \sigma_n \jump{u_n + \delta u_n'} = 0 && \text{on}\quad (0, T) \times \Gamma_c, \\
	\jump{\bm\sigma_\tau} &= \bm 0, \quad |\bm\sigma_\tau| \le g, \quad -\bm\sigma_\tau \cdot \jump{\bm u_\tau'} + g |\jump{\bm u_\tau'}| = 0 && \text{on}\quad (0, T) \times \Gamma_c,
\end{aligned}
\end{equation}
where $\delta \in (0, \infty]$ is a constant, $g = g(t, \bm x) \ge 0$ is a given function.

Several remarks are in order.
First, $\sigma_n := \sigma^{+}_n = \sigma^{-}_n$ and $\bm\sigma_\tau := \bm\sigma^{+}_{\tau} = \bm\sigma^{-}_{\tau}$ are well-defined as single-valued functions on $\Gamma_c$ because they have no jump by \eref{eq: modified Signorini}.
Second, if $\delta = 0$ in \eref{eq: modified Signorini}$_1$ then we formally recover the usual non-penetration condition introduced in \cite{KhlKov2000}.
On the other hand, if $\delta = \infty$ then we arrive at the contact condition in terms of velocity given by \cite{EJK2005}.
To see this we equivalently rewrite \eref{eq: modified Signorini}$_1$, with $\gamma := \delta^{-1}$, as
\begin{equation} \label{eq2: modified Signorini}
	\sigma_n \le 0, \quad \jump{\gamma u_n + u_n'} \ge 0, \quad \sigma_n \jump{\gamma u_n + u_n'} = 0,
\end{equation}
and set $\gamma = 0$.
For simplicity of presentation, we mainly deal with the SCD condition in the form \eref{eq2: modified Signorini} with $\gamma\in [0, \infty)$ rather than \eref{eq: modified Signorini}$_1$ in the subsequent analysis.
\begin{rem}
	The introduction of $\delta$ in \eref{eq: modified Signorini}$_1$ is mainly due to the mathematical reason as explained in Introduction.
	From a modeling viewpoint, it can be regarded as a first-order approximation to the usual non-penetration condition $\jump{u_n} \ge 0$.
	We see that the SCD condition allows for interpenetration of the crack, which is not physically feasible and may be a restriction in applications.
	However, it remains realistic for a short time interval in the case of no initial slip velocity on the crack (e.g., for the first---and usually strongest---wave of an earthquake as mentioned in \cite[Chapter 5]{EJK2005}).
\end{rem}

\section{Variational formulations} \label{sec3}
\subsection{Variational inequality}
As discussed in the previous section, the strong form of the initial boundary value problem considered in this paper is represented as follows:
\begin{equation} \label{eq: strong form}
\begin{aligned}
	\rho \bm u'' - \operatorname{div} \bm \sigma(\bm u) &= \rho \bm f && \text{in}\quad (0, T) \times \Omega_c, \\
	\bm u &= \bm 0 && \text{on}\quad (0, T) \times \Gamma_D, \\
	\bm\sigma(\bm u) \bm n_{\partial\Omega} &= \bm F && \text{on}\quad (0, T) \times \Gamma_N, \\
	\jump{\sigma_n} = 0, \quad \sigma_n \le 0, \quad \jump{\gamma u_n + u_n'} &\ge 0, \quad \sigma_n \jump{\gamma u_n + u_n'} = 0 && \text{on}\quad (0, T) \times \Gamma_c, \\
	\jump{\bm\sigma_\tau} = \bm0, \quad |\bm\sigma_\tau| &\le g, \quad \bm\sigma_\tau \cdot \jump{\bm u_\tau'} = g |\jump{\bm u_\tau'}| && \text{on}\quad (0, T) \times \Gamma_c, \\
	\bm u(0) = \bm u_0, \quad \bm u'(0) &= \dot{\bm u}_0 && \text{on}\quad \{0\} \times \Omega_c.
\end{aligned}
\end{equation}
Let us derive a weak formulation to this problem assuming that $\bm u$ is smooth enough in $[0, T] \times (\overline\Omega\setminus\Gamma_c)$.
To this end we introduce the following function spaces and convex cone:
\begin{equation*}
	\bm H := \bm L^2(\Omega_c), \quad
	\bm V := \{ \bm v \in \bm H^1(\Omega_c) \,:\, \bm v = \bm 0 \text{ on } \Gamma_D \}, \quad
	\bm K := \{ \bm v \in \bm V \,:\, \jump{v_n} \ge 0 \text{ a.e.\ on } \Gamma_c \}.
\end{equation*}
Multiplying \eref{eq: strong form}$_1$ by $\bm v - (\gamma\bm u + \bm u')$ with an arbitrary $\bm v \in \bm K$ and integrating over $\Omega_c$, we obtain
\begin{align*}
	&\rho \big( \bm u''(t), \bm v - (\gamma\bm u(t) + \bm u'(t)) \big)
	+ \big( \bm\sigma(\bm u(t)), \nabla(\bm v - (\gamma\bm u(t) + \bm u'(t))) \big) \\
	&\qquad + \big( \sigma_n(t), \jump{v_n - (\gamma u_n(t) + u'_n(t))} \big)_{\Gamma_c}
	+ \big( \bm\sigma_\tau(t), \jump{\bm v_\tau - (\gamma\bm u_\tau(t) + \bm u'_\tau(t))} \big)_{\Gamma_c} \\
	= \; &\rho \big( \bm f(t), \bm v - (\gamma\bm u(t) + \bm u'(t)) \big)
	+ \big( \bm F(t), \bm v - (\gamma\bm u(t) + \bm u'(t)) \big)_{\Gamma_N}
	\qquad \forall t \in (0, T),
\end{align*}
where we have used $\jump{\sigma_n} = 0, \jump{\bm\sigma_\tau} = \bm0$ on $\Gamma_c$ and the fact that the outer unit normal w.r.t.\ $\Omega_{\pm}$ on $\Gamma$ is $\mp\bm n$.
By \eref{eq: constitutive law} we see that
\begin{equation*}
	( \bm\sigma(\bm u), \nabla\bm v )
	= ( \bm\sigma(\bm u), \mathbb E(\bm v) )
	= \lambda (\operatorname{div} \bm u, \operatorname{div} \bm v) + 2\mu (\mathbb E(\bm u), \mathbb E(\bm v)) =: a(\bm u, \bm v) \quad \forall\bm v \in \bm V.
\end{equation*}
It follows from \eref{eq: strong form}$_4$ and \eref{eq: strong form}$_5$ that
\begin{align}
	\big( \sigma_n(\bm u(t)), \jump{v_n - (\gamma u_n(t) + u'_n(t))} \big)_{\Gamma_c} &\le 0, \label{eq: VI for sigma n} \\
	\big( \bm\sigma_\tau((\bm u(t))), \jump{\bm v_\tau - (\gamma\bm u_\tau(t) + \bm u'_\tau(t))} \big)_{\Gamma_c}
	&\le \big( g(t), |\jump{\bm v_\tau - \gamma\bm u_\tau(t)}| - |\jump{\bm u'_\tau(t)}| \big)_{\Gamma_c}. \label{eq: VI for sigma tau}
\end{align}
Consequently,
\begin{equation} \label{eq: VI}
\begin{aligned}
	&\rho \big( \bm u''(t), \bm v - (\gamma\bm u(t) + \bm u'(t)) \big) + a\big( \bm u(t), \bm v - (\gamma\bm u(t) + \bm u'(t)) \big)
		+ \big( g(t), |\jump{\bm v_\tau - \gamma\bm u_\tau(t)}| - |\jump{\bm u'_\tau(t)}| \big)_{\Gamma_c} \\
	\ge \;& \rho \big( \bm f(t), \bm v - (\gamma\bm u(t) + \bm u'(t)) \big)
		+ \big( \bm F(t), \bm v - \gamma\bm u(t) + \bm u'(t)) \big)_{\Gamma_N}
		\qquad \forall \bm v \in \bm V, \quad \text{a.e.\ } t \in (0, T).
\end{aligned}
\end{equation}

This is a variational inequality of hyperbolic type which is equivalent to the strong form \eref{eq: strong form} as seen below.
\begin{prop} \label{prop: equivalence}
	Let $\bm u : [0, T] \times (\overline\Omega\setminus\Gamma_c) \to \mathbb R^3$ be smooth enough.
	Then $\bm u$ solves \eref{eq: strong form} if and only if the following hold:
	
	(i) $\bm u(t) \in \bm V$ for all $t \in (0, T)$;
	
	(ii) $\bm u(0) = \bm u_0$ and $\bm u'(0) = \dot{\bm u}_0$;
	
	(iii) $\gamma \bm u(t) + \bm u'(t) \in \bm K$ for all $t \in (0, T)$;
	
	(iv) $\bm u$ satisfies the hyperbolic variational inequality \eref{eq: VI}.
\end{prop}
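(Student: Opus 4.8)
The plan is to prove the two implications separately, the forward one being essentially the computation carried out just before the statement. For necessity, suppose $\bm u$ solves \eref{eq: strong form}; then (ii) is \eref{eq: strong form}$_6$, $\bm u(t)\in\bm V$ (part of (i)) comes from \eref{eq: strong form}$_2$ and smoothness, and differentiating \eref{eq: strong form}$_2$ in $t$ gives $\bm u'(t)=\bm 0$ on $\Gamma_D$, so $\gamma\bm u(t)+\bm u'(t)\in\bm V$; together with $\jump{\gamma u_n+u_n'}\ge 0$ this is (iii). Property (iv) is precisely the derivation preceding the Proposition (multiply \eref{eq: strong form}$_1$ by $\bm v-(\gamma\bm u(t)+\bm u'(t))$ with $\bm v\in\bm K$, integrate over $\Omega_\pm$, use $\jump{\sigma_n}=0$, $\jump{\bm\sigma_\tau}=\bm 0$, \eref{eq: strong form}$_3$ and $\bm v|_{\Gamma_D}=\bm 0$ on the boundary, and estimate the interface terms via \eref{eq: VI for sigma n}--\eref{eq: VI for sigma tau}).

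For sufficiency I would recover \eref{eq: strong form} from (i)--(iv) by inserting well-chosen test functions into \eref{eq: VI}. First, fixing $t$ and taking $\bm v=\gamma\bm u(t)+\bm u'(t)\pm\bm\psi$ with $\bm\psi\in C^\infty(\overline\Omega)^3$, $\bm\psi=\bm 0$ on $\Gamma_D$ --- which is admissible since $\bm\psi$ single-valued gives $\jump{\bm\psi}=\bm 0$ and hence $\bm v\in\bm K$ by (iii) --- the friction term in \eref{eq: VI} collapses to $(g(t),|\jump{\bm u'_\tau(t)}|-|\jump{\bm u'_\tau(t)}|)_{\Gamma_c}=0$, and writing the inequality for both signs yields the identity
\[ \rho\big(\bm u''(t),\bm\psi\big)+a\big(\bm u(t),\bm\psi\big)=\rho\big(\bm f(t),\bm\psi\big)+\big(\bm F(t),\bm\psi\big)_{\Gamma_N}. \]
Restricting to $\bm\psi$ supported in $\Omega_c$ and integrating by parts gives \eref{eq: strong form}$_1$. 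Feeding this back and integrating by parts over $\Omega_\pm$ for general $\bm\psi$, the identity reduces to a statement about the boundary terms: choosing $\bm\psi$ supported near $\Gamma_N$ gives $\bm\sigma(\bm u(t))\bm n_{\partial\Omega}=\bm F(t)$ on $\Gamma_N$ (i.e.\ \eref{eq: strong form}$_3$), and choosing $\bm\psi$ supported near $\Gamma_c$ gives $\jump{\bm\sigma(\bm u(t))\bm n}=\bm 0$ on $\Gamma_c$, that is $\jump{\sigma_n}=0$ and $\jump{\bm\sigma_\tau}=\bm 0$.

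With these established, integration by parts turns \eref{eq: VI} into the interface variational inequality
\begin{multline*}
-\big(\sigma_n(t),\jump{v_n-(\gamma u_n(t)+u'_n(t))}\big)_{\Gamma_c}-\big(\bm\sigma_\tau(t),\jump{\bm v_\tau-(\gamma\bm u_\tau(t)+\bm u'_\tau(t))}\big)_{\Gamma_c}\\
+\big(g(t),|\jump{\bm v_\tau-\gamma\bm u_\tau(t)}|-|\jump{\bm u'_\tau(t)}|\big)_{\Gamma_c}\ge 0
\end{multline*}
for all $\bm v\in\bm K$. I would decouple it as follows. Taking $\bm v\in\bm K$ with $\jump{\bm v_\tau}=\jump{\gamma\bm u_\tau(t)+\bm u'_\tau(t)}$ and $\jump{v_n}$ ranging over a dense set of nonnegative functions on $\Gamma_c$, and noting that $\jump{\gamma u_n(t)+u'_n(t)}\ge 0$ by (iii) is itself admissible, the choices $\jump{v_n}=0$ and $\jump{v_n}=2\jump{\gamma u_n(t)+u'_n(t)}$ force $(\sigma_n(t),\jump{\gamma u_n(t)+u'_n(t)})_{\Gamma_c}=0$, then $(\sigma_n(t),\jump{v_n})_{\Gamma_c}\le 0$ for all $\jump{v_n}\ge 0$ so $\sigma_n(t)\le 0$ a.e., and combining these, $\sigma_n(t)\jump{\gamma u_n(t)+u'_n(t)}=0$ a.e.; this is \eref{eq: strong form}$_4$. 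Symmetrically, taking $\bm v\in\bm K$ with $\jump{v_n}=\jump{\gamma u_n(t)+u'_n(t)}$ and letting $\bm z:=\jump{\bm v_\tau-\gamma\bm u_\tau(t)}$ be an arbitrary tangential field on $\Gamma_c$, the inequality becomes $(\bm\sigma_\tau(t),\bm z-\jump{\bm u'_\tau(t)})_{\Gamma_c}\le(g(t),|\bm z|-|\jump{\bm u'_\tau(t)}|)_{\Gamma_c}$; the choices $\bm z=\jump{\bm u'_\tau(t)}\pm\bm\zeta$ (arbitrary tangential $\bm\zeta$), $\bm z=\bm 0$ and $\bm z=2\jump{\bm u'_\tau(t)}$ then give $|\bm\sigma_\tau(t)|\le g(t)$ a.e.\ and $\bm\sigma_\tau(t)\cdot\jump{\bm u'_\tau(t)}=g(t)|\jump{\bm u'_\tau(t)}|$ a.e., which is \eref{eq: strong form}$_5$. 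Since \eref{eq: strong form}$_2$ lies in (i) and \eref{eq: strong form}$_6$ is (ii), $\bm u$ solves \eref{eq: strong form}.

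The main obstacle, beyond the sign bookkeeping in the integrations by parts, will be the lifting/density statements used throughout: that prescribed traces on $\Gamma_N$, and prescribed normal (nonnegative) and tangential traces on $\Gamma_c$ chosen independently, are realized by elements of $\bm K$ vanishing on $\Gamma_D$. These rest on $\overline{\Gamma_c}\Subset\Gamma$ and on the structure of $H^{1/2}_{00}(\Gamma_c)$. The other delicate point is the ordering: one must derive $\jump{\bm\sigma(\bm u)\bm n}=\bm 0$ on $\Gamma_c$ before the $\Gamma$-boundary term produced by integration by parts collapses to $(\bm\sigma(\bm u)\bm n,\jump{\bm v})_{\Gamma_c}$, which is what makes the final step a clean variational inequality posed on $\Gamma_c$. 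Passing from the resulting integrated inequalities to a.e.\ pointwise relations is then routine.
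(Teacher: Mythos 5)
Your proposal is correct and follows essentially the same route as the paper's proof: test with $\bm v=\gamma\bm u+\bm u'\pm\bm w$ where $\jump{\bm w}=\bm0$ to kill the friction term, obtain the variational equality that yields \eref{eq: strong form}$_1$, \eref{eq: strong form}$_3$ and the zero jumps of $\sigma_n,\bm\sigma_\tau$, then reduce \eref{eq: VI} to the interface inequality and decouple by choosing $\jump{v_n}$ and $\jump{\bm v_\tau}$ independently (your choices $\jump{v_n}=0,\ 2\jump{\gamma u_n+u_n'}$ and $\bm z=\bm0,\ 2\jump{\bm u_\tau'}$ are exactly those in the paper). The only cosmetic difference is that you spell out the necessity direction (which the paper dismisses as immediate) and split the test functions by support, but the underlying argument is the same.
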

\begin{proof}
	The proof is essentially similar to \cite[pp.\ 125--126]{ItoKas2021}.
	It suffices to show the ``if'' part.
	Taking a test function $\bm v = \pm\bm w + \gamma\bm u(t) + \bm u'(t)$ with arbitrary $\bm w \in \bm V$ such that $\jump{\bm w} = 0$ on $\Gamma$, one can reduce \eref{eq: VI} to
	\begin{equation*}
		\rho ( \bm u''(t), \bm w ) + a( \bm u(t), \bm w ) = \rho ( \bm f(t), \bm w ) + ( \bm F(t), \bm w )_{\Gamma_N},
	\end{equation*}
	which implies \eref{eq: strong form}$_1$, \eref{eq: strong form}$_3$, $\jump{\sigma_n} = 0$ on $\Gamma_c$, and $\jump{\bm\sigma_\tau} = \bm0$ on $\Gamma_c$.
	Then \eref{eq: VI for sigma n} and \eref{eq: VI for sigma tau} follow from integration by parts (note that each of $\jump{v_n}$ and $\jump{\bm v_\tau}$ can be chosen to an arbitrary smooth function independently).
	
	First we focus on \eref{eq: VI for sigma n}.
	Setting $\jump{v_n}$ to $0$ and $2\jump{\gamma u_n(t) + u'_n(t)}$ gives $( \sigma_n(t), \jump{\gamma u_n(t) + u'_n(t)} )_{\Gamma_c} = 0$.
	Therefore, $(\sigma_n(t), \jump{v_n} )_{\Gamma_c} \le 0$ for arbitrary $\jump{v_n} \ge 0$, which implies $\sigma_n(t) \le 0$ on $\Gamma_c$.
	This combined with $\jump{\gamma u_n(t) + u'_n(t)} \ge 0$ on $\Gamma_c$ deduces the last equality of \eref{eq: strong form}$_4$.
	
	Next, in \eref{eq: VI for sigma tau}, setting $\jump{\bm v_\tau}$ to $\jump{\gamma\bm u_\tau(t)}$ and $\jump{\gamma\bm u_\tau(t) + 2\bm u'_\tau(t)}$ gives $\big( \bm\sigma_\tau(t), \jump{\bm u'_\tau(t)} \big)_{\Gamma_c} = \big( g(t), |\jump{\bm u'_\tau(t)}| \big)_{\Gamma_c}$.
	Therefore, $(\bm\sigma_\tau(t), \jump{\bm v_\tau})_{\Gamma_c} \le (g(t), |\jump{\bm v_\tau}|)_{\Gamma_c}$ for arbitrary $\jump{\bm v_\tau}$, which implies $|\bm\sigma_\tau(t)| \le g(t)$ on $\Gamma_c$.
	Then the last equality of \eref{eq: strong form}$_5$ also follows.
	This proves that $\bm u$ solves \eref{eq: strong form}.
\end{proof}

\subsection{Main result}
In view of \pref{prop: equivalence}, let us define a solution of \eref{eq: strong form} based on its variational form.

\begin{defi}
	Given $\bm f, \bm F, g, \bm u_0, \dot{\bm u}_0$, we say that $\bm u \in W^{2,\infty}(0, T; \bm H) \cap W^{1, \infty}(0, T; \bm V)$ is a strong solution of \eref{eq: strong form} if $\bm u$ satisfies conditions (i)--(iv) in \pref{prop: equivalence}.
\end{defi}

\begin{rem}
	For second-order hyperbolic problems, one usually considers a weak solution in $W^{1, \infty}(0, T; \bm L^2(\Omega_c)) \cap L^\infty(0, T; \bm H^1(\Omega_c))$.
	However, this class would not be appropriate for dynamic elasticity problems with friction where the trace of velocity explicitly appears on an interface.
	We also note that in the Kelvin--Voigt viscoelastic case, a natural class of a weak solution becomes $W^{1, \infty}(0, T; \bm L^2(\Omega_c)) \cap H^1(0, T; \bm H^1(\Omega_c))$, avoiding this issue.
\end{rem}

Now we are ready to state our main result in this paper.
\begin{thm} \label{thm: main}
	Let $\gamma \in [0, \infty)$, $\bm f \in H^1(0, T; \bm H), \bm F \in H^2(0, T; \bm L^2(\Gamma_N))$, and let $g \in H^2(0, T; L^2(\Gamma_c))$ be non-negative.
	We assume that $\bm u_0 \in \bm V$, $\dot{\bm u}_0 \in \bm V$ and that they satisfy the following compatibility conditions:
	\begin{itemize}
		\item $-\operatorname{div} \bm\sigma (\bm u_0) \in \bm H$;
		\item $\bm\sigma(\bm u_0) \bm n_{\partial\Omega} = \bm F(0)$ on $\Gamma_N$;
		\item $\sigma_n(\bm u_0^{+}) = \sigma_n(\bm u_0^{-}) = 0$ and $\jump{\gamma u_{0n} + \dot{u}_{0n}} = 0$ on $\Gamma_c$;
		\item $\bm\sigma_\tau(\bm u_0^{+}) = \bm\sigma_\tau(\bm u_0^{-}) = \bm 0$ and $\jump{\dot{\bm u}_{0\tau}} = \bm 0$ on $\Gamma_c$.
	\end{itemize}
	Then there exists a unique strong solution of \eref{eq: strong form}.
\end{thm}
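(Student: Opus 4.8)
The plan is to follow the regularization–Galerkin strategy of \cite{ItoKas2021}, adapting it to the SCD condition. \textbf{Step 1 (Regularization).} I would first replace the variational inequality \eref{eq: VI} by a family of penalized/regularized variational \emph{equations} depending on a small parameter $\varepsilon > 0$. Concretely, the non-penetration constraint $\jump{\gamma u_n + u_n'} \ge 0$ is enforced by a penalty term of the form $\varepsilon^{-1}\big( [\jump{\gamma u_n + u_n'}]_-, \jump{v_n} \big)_{\Gamma_c}$ (with $[s]_- = \min\{s,0\}$), and the nondifferentiable friction functional $|\cdot|$ is smoothed to a $C^1$ convex function $\psi_\varepsilon$ with $|\psi_\varepsilon'| \le 1$. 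This yields, for each $\varepsilon$, a semilinear hyperbolic system with monotone, Lipschitz boundary nonlinearities posed on $\bm V$.

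\textbf{Step 2 (Galerkin approximation and solvability of the regularized problem).} I would introduce a Galerkin basis of $\bm V$ (e.g.\ eigenfunctions of the elasticity operator with the mixed Dirichlet/Neumann/traction-free conditions), project the regularized equation onto finite-dimensional subspaces, and obtain a system of ODEs whose local solvability follows from the Cauchy--Lipschitz theorem; the compatibility conditions on $\bm u_0, \dot{\bm u}_0$ are used to choose consistent initial data for the Galerkin system so that $\bm u_m''(0)$ is bounded in $\bm H$. \textbf{Step 3 (a priori estimates).} This is where the bulk of the work lies and where the SCD condition makes the analysis more delicate than in \cite{ItoKas2021}. I would derive three levels of estimates, all uniform in $m$ and $\varepsilon$: (a) an energy estimate obtained by testing with $\gamma\bm u_m + \bm u_m'$, controlling $\|\bm u_m'\|_{\bm H}$ and $\|\bm u_m\|_{\bm V}$ via Gronwall, where the penalty and friction terms have a favorable sign or are absorbed using $g \ge 0$ and the regularity $g \in H^2$; (b) a higher estimate obtained by differentiating the regularized equation in time and testing with $\gamma\bm u_m' + \bm u_m''$, yielding bounds on $\|\bm u_m''\|_{\bm H}$ and $\|\bm u_m'\|_{\bm V}$ — here one must carefully handle the time derivative of the penalty term, whose monotonicity gives a sign, and of the smoothed friction term, using $\bm F \in H^2$, $g \in H^2$, $\bm f \in H^1$; (c) a bound showing the penalty residual $\varepsilon^{-1}\|[\jump{\gamma u_{mn} + u_{mn}'}]_-\|_{L^2(\Gamma_c)}$ stays bounded, which is what ultimately recovers the constraint $\gamma\bm u + \bm u' \in \bm K$ in the limit. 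The main obstacle is obtaining estimate (b) with a constant independent of $\varepsilon$: the coupling of normal displacement and normal velocity in the constraint means the penalty term, after differentiation, produces a term that is not obviously of one sign, and one must exploit the structure $\jump{\gamma u_n + u_n'} = \frac{d}{dt}\jump{u_n} + \gamma\jump{u_n}$ together with the trace theory on the Lions--Magenes space $H^{1/2}_{00}(\Gamma_c)$ to close the estimate; the role of $\delta > 0$ (i.e.\ $\gamma < \infty$, but more importantly the velocity actually appearing) is precisely to make this term manageable, reflecting the "stabilization" heuristic in the Introduction.

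\textbf{Step 4 (Passage to the limit).} With the uniform bounds, I would pass $m \to \infty$ (extracting weak-$*$ limits in $W^{2,\infty}(0,T;\bm H) \cap W^{1,\infty}(0,T;\bm V)$, with strong $\bm H$-convergence of $\bm u_m'$ and traces via Aubin--Lions) to get a solution $\bm u_\varepsilon$ of the regularized problem, then pass $\varepsilon \to 0$; the monotone penalty term is handled by a standard monotonicity/Minty-type argument to recover \eref{eq: VI for sigma n} (and $\gamma\bm u + \bm u' \in \bm K$ from the bound in (c)), while lower semicontinuity of the convex friction functional and $\psi_\varepsilon \to |\cdot|$ uniformly give \eref{eq: VI for sigma tau}; continuity in time and attainment of the initial conditions follow from the regularity class. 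This establishes existence.

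\textbf{Step 5 (Uniqueness).} Suppose $\bm u_1, \bm u_2$ are two strong solutions and set $\bm w = \bm u_1 - \bm u_2$. Testing the variational inequality for $\bm u_1$ with $\bm v = \gamma\bm u_2(t) + \bm u_2'(t)$ (which lies in $\bm K$ by (iii)) and symmetrically, then adding, the friction terms cancel (the $g$-terms telescope because the same argument $\jump{\bm v_\tau - \gamma\bm u_\tau}$ appears on both sides) and the contact terms combine to a non-positive quantity, leaving an inequality of the form $\rho(\bm w'', \gamma\bm w + \bm w') + a(\bm w, \gamma\bm w + \bm w') \le 0$. Expanding, $\rho(\bm w'', \bm w') + a(\bm w, \bm w') = \frac{1}{2}\frac{d}{dt}\big(\rho\|\bm w'\|_{\bm H}^2 + a(\bm w,\bm w)\big)$ and $\gamma\big(\rho(\bm w'',\bm w) + a(\bm w,\bm w)\big) = \gamma\frac{d}{dt}\rho(\bm w',\bm w) - \gamma\rho\|\bm w'\|_{\bm H}^2 + \gamma a(\bm w,\bm w)$; introducing the modified energy $\mathcal E(t) = \rho\|\bm w'\|_{\bm H}^2 + a(\bm w,\bm w) + 2\gamma\rho(\bm w',\bm w)$ (which is equivalent to $\rho\|\bm w'\|_{\bm H}^2 + a(\bm w,\bm w)$ plus lower-order terms, using Korn's and Poincaré's inequalities for $\bm V$), one obtains $\frac{d}{dt}\mathcal E(t) \le C\mathcal E(t)$, and since $\bm w(0) = \bm w'(0) = 0$, Gronwall forces $\bm w \equiv 0$. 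The subtlety here, again stemming from the SCD condition, is that the naive energy is not directly differentiated; one needs the $\gamma$-modified energy and the observation that the velocity-coupling in $\bm K$ is exactly what makes $\gamma\bm u_i + \bm u_i'$ an admissible test function, which is unavailable when the constraint is on displacement alone — this is the structural reason uniqueness holds for $\delta > 0$ but is open for $\delta = 0$.
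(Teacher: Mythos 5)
Your Steps 1--4 follow the same regularization--Galerkin route as the paper (regularized variational equation, two levels of energy estimates via testing with $\gamma\bm u_m + \bm u_m'$ and $\gamma\bm u_m' + \bm u_m''$, then $m\to\infty$ and $\epsilon\to 0$). Two small remarks: the paper uses the \emph{cubic} potential $\psi_\epsilon(x) = \tfrac{1}{3\epsilon}[x]_-^3$ rather than a quadratic one, so that $\beta_\epsilon$ and $\beta_\epsilon'$ are $C^1$/continuous — a genuinely quadratic penalty would not be differentiable enough to carry out your Step 3(b) and the underlying ODE theory cleanly. Also, your worry in Step 3(b) that the differentiated penalty ``is not obviously of one sign'' is unfounded: differentiating produces $\beta_\epsilon'(\jump{\gamma u_{mn}+u'_{mn}})\jump{\gamma u'_{mn}+u''_{mn}}$, and after testing with $\gamma\bm u_m'+\bm u_m''$ one gets $\beta_\epsilon'(\cdot)\,|\jump{\gamma u'_{mn}+u''_{mn}}|^2 \ge 0$ by monotonicity of $\beta_\epsilon$, so it drops out with the right sign.

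Step 5, however, has a genuine gap. Testing $\bm u_1$'s VI with $\bm v = \gamma\bm u_2 + \bm u_2'$ and $\bm u_2$'s VI with $\bm v = \gamma\bm u_1 + \bm u_1'$ and adding, the friction contributions become
\begin{equation*}
	\big(g,\ |\jump{\bm u_{2\tau}'} - \gamma\jump{\bm w_\tau}| - |\jump{\bm u_{2\tau}'}|\big)_{\Gamma_c}
	+ \big(g,\ |\jump{\bm u_{1\tau}'} + \gamma\jump{\bm w_\tau}| - |\jump{\bm u_{1\tau}'}|\big)_{\Gamma_c},
\end{equation*}
where $\bm w = \bm u_1 - \bm u_2$. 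These do \emph{not} telescope when $\gamma>0$; the best one can say via the triangle inequality is that the sum is bounded by $2\gamma\big(g, |\jump{\bm w_\tau}|\big)_{\Gamma_c}$, a quantity \emph{linear} in $\bm w$ (times the fixed function $g$). Your inequality therefore reads $\tfrac{d}{dt}\mathcal E \le C\mathcal E + C\gamma\|g\|_{L^2(\Gamma_c)}\sqrt{\mathcal E}$, and Gronwall yields only $\sqrt{\mathcal E(t)} \le C\gamma\int_0^t\|g(s)\|\,ds$, not $\mathcal E\equiv 0$; uniqueness does not follow. The paper handles this by not estimating the leftover friction remainder with the bound $|\bm\sigma_\tau|\le g$ at all. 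Instead it characterizes $\sigma_n(\bm w)$ and $\bm\sigma_\tau(\bm w)$ as dual elements (the source terms $\bm f, \bm F$ cancel in the difference), proves a monotonicity lemma giving $\langle\sigma_n(\bm w),\jump{\gamma w_n+w_n'}\rangle\ge 0$ and $\langle\bm\sigma_\tau(\bm w),\jump{\bm w_\tau'}\rangle\ge 0$, and then treats the remaining term $-\gamma\langle\bm\sigma_\tau(\bm w),\jump{\bm w_\tau}\rangle$ by constructing a vector field $\bm N\in\bm H^1(\Omega)$ extending $\bm n$ on $\Gamma_c$ and vanishing on $\partial\Omega$; writing $\bar{\bm w} = \bm w - (\bm w\cdot\bm N)\bm N$ (so $\jump{\bar w_n}=0$, $\jump{\bar{\bm w}_\tau}=\jump{\bm w_\tau}$) lets one re-express $-\gamma\langle\bm\sigma_\tau(\bm w),\jump{\bm w_\tau}\rangle = \gamma\rho(\bm w'',\bar{\bm w}) + \gamma a(\bm w,\bar{\bm w})$, which is quadratic in $\bm w$ and closes Gronwall after two integrations in time. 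This $\bm N$-decomposition lemma is the key ingredient your proposal is missing.
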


\begin{rem}
	Since $\bm u_0 \in \bm V$ satisfies $-\operatorname{div} \bm\sigma (\bm u_0^{\pm}) \in \bm L^2(\Omega_{\pm})$, initial tractions $\bm\sigma(\bm u_0^{\pm}) \bm n_{\partial\Omega}$ and $\bm\sigma(\bm u_0^{\pm}) \bm n$ are well-defined in $(\bm H^{1/2}_{00}(\Gamma_N))^*$ and $(\bm H^{1/2}_{00}(\Gamma_c))^*$, respectively.
	The third and fourth conditions above are stronger than just requiring that $\bm u_0$ and $\dot{\bm u}_0$ satisfy \eref{eq: strong form}$_4$ and \eref{eq: strong form}$_5$ at $t = 0$;
	however, we are not aware whether they can be weakened.
\end{rem}

\subsection{Regularized problem}
It is not easy to directly construct a solution of the time-dependent variational inequality \eref{eq: VI} because it contains non-differentiable relations.
To see this, we introduce two convex functions
\begin{equation*}
	\psi(x) =
	\begin{cases}
		+\infty & (x < 0), \\
		0 & (x \ge 0),
	\end{cases}
	\qquad
	\varphi(\bm x) = |\bm x| \quad (\bm x \in \mathbb R^3),
\end{equation*}
whose subdifferentials $\beta := \partial\psi$ and $\bm\alpha := \partial\varphi$ are maximal monotone graphs given by
\begin{equation*}
	\beta(x) =
	\begin{cases}
		\emptyset & x < 0, \\
		(-\infty, 0] & x = 0, \\
		0 & x > 0,
	\end{cases}
	\qquad
	\bm\alpha(\bm x) =
	\begin{cases}
		\bm x/|\bm x| & (\bm x \neq \bm0), \\
		\{ \bm y \in \mathbb R^3 \,:\, |\bm y| \le 1 \} & (\bm x = \bm0).
	\end{cases}
\end{equation*}
We then observe that the SCD and Tresca conditions in \eref{eq: strong form} are concisely expressed as
\begin{equation} \label{eq: BC with subdifferential}
	\sigma_n \in \beta(\jump{\gamma u_n + u'_n}), \qquad \bm\sigma_\tau \in g \bm\alpha(\jump{\bm u_\tau'}).
\end{equation}

To address the difficulty that $\beta$ and $\bm\alpha$ are multi-valued functions and non-differentiable, we approximate $\psi$ and $\varphi$ by the following functions which are convex and $W^{3, \infty} \cap C^2$:
\begin{equation*}
	\psi_\epsilon(x) = \frac{1}{3\epsilon} [x]_{-}^3,
	\qquad
	\varphi_\epsilon(\bm x) = \sqrt{|\bm x|^2 + \epsilon^2},
\end{equation*}
where $\epsilon > 0$ is a constant and $[x]_{-} := \max\{-x, 0\}$ for $x \in \mathbb R$.
Their derivatives $\beta_\epsilon := \frac{d\psi_\epsilon}{dx}$ and $\alpha_\epsilon := \nabla \varphi_\epsilon$ are given by
\begin{equation*}
	\beta_\epsilon(x) = -\frac1\epsilon [x]_{-}^2, \qquad
	\bm\alpha_\epsilon(\bm x) = \frac{\bm x}{\sqrt{|\bm x|^2 + \epsilon^2}},
\end{equation*}
which are monotone and $W^{2, \infty} \cap C^1$.

With this preparation we consider the following regularized problem denoted by \textbf{(VI)$_\epsilon$}: find $\bm u_\epsilon(t) \in \bm V$ such that $\bm u_\epsilon(0) = \bm u_0, \bm u_\epsilon'(0) = \dot{\bm u}_0$ and
\begin{equation} \label{eq: VI epsilon}
	\begin{aligned}
		&\rho (\bm u_\epsilon''(t), \bm v - (\gamma \bm u_\epsilon(t) + \bm u_\epsilon'(t))) + a(\bm u_\epsilon(t), \bm v - (\gamma \bm u_\epsilon(t) + \bm u_\epsilon'(t))) \\
		&\qquad + \big( 1, \psi_\epsilon( \jump{v_n} ) - \psi_\epsilon( \jump{\gamma u_{\epsilon n}(t) + u_{\epsilon n}'(t)} ) \big)_{\Gamma_c} + \big( g(t), \varphi_\epsilon( \jump{\bm v_\tau - \gamma \bm u_{\epsilon\tau}(t)} ) - \varphi_\epsilon( \jump{\bm u_{\epsilon\tau}'(t)} ) \big)_{\Gamma_c} \\
		\ge \;& \rho(\bm f(t), \bm v - (\gamma \bm u_\epsilon(t) + \bm u_\epsilon'(t))) + (\bm F(t), \bm v - (\gamma \bm u_\epsilon(t) + \bm u_\epsilon'(t)))_{\Gamma_N} \qquad \forall \bm v \in \bm V, \quad \text{a.e.\ } t \in (0, T).
	\end{aligned}
\end{equation}

In the proposition below we find that \textbf{(VI)$_\epsilon$} is equivalent to the following variational equality problem denoted by \textbf{(VE)$_\epsilon$}: find $\bm u_\epsilon(t) \in \bm V$ such that $\bm u_\epsilon(0) = \bm u_0, \bm u_\epsilon'(0) = \dot{\bm u}_0$ and
\begin{equation} \label{eq: VE epsilon}
	\begin{aligned}
		&\rho (\bm u_\epsilon''(t), \bm v) + a(\bm u_\epsilon(t), \bm v) + \big( \beta_\epsilon( \jump{\gamma u_{\epsilon n}(t) + u_{\epsilon n}'(t)}), \jump{v_n} \big)_{\Gamma_c} + \big( g(t) \bm\alpha_\epsilon( \jump{\bm u_{\epsilon\tau}'(t)}), \jump{\bm v_\tau} \big)_{\Gamma_c} \\
		= \; & \rho(\bm f(t), \bm v) + (\bm F(t), \bm v)_{\Gamma_N} \qquad \forall \bm v \in \bm V, \quad \text{a.e.\ } t \in (0, T).
	\end{aligned}
\end{equation}

\begin{prop} \label{prop: VI and VE are equivalent}
	Let $\bm u_\epsilon \in W^{2,\infty}(0, T; \bm H) \cap W^{1, \infty}(0, T; \bm V)$.
	It solves \textbf{(VI)$_\epsilon$} if and only if it solves \textbf{(VE)$_\epsilon$}.
\end{prop}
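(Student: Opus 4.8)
The plan is to exploit that the regularized potentials $\psi_\epsilon$ and $\varphi_\epsilon$, in contrast to $\psi$ and $\varphi$, are convex and of class $C^1$ (with $\psi_\epsilon' = \beta_\epsilon$ and $\nabla\varphi_\epsilon = \bm\alpha_\epsilon$), so that \textbf{(VE)$_\epsilon$} is exactly the first-order optimality (Euler--Lagrange) condition underlying \textbf{(VI)$_\epsilon$}. Observe first that both problems prescribe the same initial data $\bm u_\epsilon(0) = \bm u_0$, $\bm u_\epsilon'(0) = \dot{\bm u}_0$, so only the variational relations \eref{eq: VI epsilon} and \eref{eq: VE epsilon} must be compared; moreover, since $\bm u_\epsilon \in W^{1,\infty}(0,T;\bm V)$ we have $\bm u_\epsilon(t), \bm u_\epsilon'(t) \in \bm V$ for a.e.\ $t$, whence $\gamma\bm u_\epsilon(t)+\bm u_\epsilon'(t) \in \bm V$ and $\bm v-(\gamma\bm u_\epsilon(t)+\bm u_\epsilon'(t)) \in \bm V$ for every $\bm v\in\bm V$; and, because traces of $\bm V$-functions on the (bounded, two-dimensional) $\Gamma_c$ belong to $\bm L^4(\Gamma_c)$ while $\beta_\epsilon$ has at most quadratic growth and $\bm\alpha_\epsilon$ is bounded, all boundary integrals occurring below are finite.

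For \textbf{(VE)$_\epsilon$} $\Rightarrow$ \textbf{(VI)$_\epsilon$}: fix $\bm v\in\bm V$ and use $\bm v-(\gamma\bm u_\epsilon(t)+\bm u_\epsilon'(t))\in\bm V$ as the test function in \eref{eq: VE epsilon}. The convexity (subgradient) inequalities $\beta_\epsilon(a)(b-a)\le\psi_\epsilon(b)-\psi_\epsilon(a)$ and $\bm\alpha_\epsilon(\bm a)\cdot(\bm b-\bm a)\le\varphi_\epsilon(\bm b)-\varphi_\epsilon(\bm a)$, applied pointwise on $\Gamma_c$ with $a=\jump{\gamma u_{\epsilon n}(t)+u_{\epsilon n}'(t)}$, $b=\jump{v_n}$ and $\bm a=\jump{\bm u_{\epsilon\tau}'(t)}$, $\bm b=\jump{\bm v_\tau-\gamma\bm u_{\epsilon\tau}(t)}$ (the latter multiplied by $g(t)\ge0$) and then integrated over $\Gamma_c$, bound the two boundary terms of \eref{eq: VE epsilon} from below by the corresponding $\psi_\epsilon$- and $\varphi_\epsilon$-terms of \eref{eq: VI epsilon}; since $b-a=\jump{v_n-(\gamma u_{\epsilon n}(t)+u_{\epsilon n}'(t))}$ and $\bm b-\bm a=\jump{(\bm v-(\gamma\bm u_\epsilon(t)+\bm u_\epsilon'(t)))_\tau}$, all other terms coincide, and the equality \eref{eq: VE epsilon} passes into the inequality \eref{eq: VI epsilon}. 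This implication involves no real technicality.

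For \textbf{(VI)$_\epsilon$} $\Rightarrow$ \textbf{(VE)$_\epsilon$}: given arbitrary $\bm w\in\bm V$ and $s>0$, insert $\bm v=\gamma\bm u_\epsilon(t)+\bm u_\epsilon'(t)+s\bm w\in\bm V$ into \eref{eq: VI epsilon}; the inertia, elastic, and load terms become $s$ times the corresponding linear expressions in $\bm w$, while the boundary terms become $\big(1,\psi_\epsilon(\jump{\gamma u_{\epsilon n}(t)+u_{\epsilon n}'(t)}+s\jump{w_n})-\psi_\epsilon(\jump{\gamma u_{\epsilon n}(t)+u_{\epsilon n}'(t)})\big)_{\Gamma_c}$ and the analogous $\varphi_\epsilon$-term. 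Dividing by $s$ and sending $s\downarrow0$, the difference quotients converge pointwise a.e.\ on $\Gamma_c$ to $\beta_\epsilon(\jump{\gamma u_{\epsilon n}(t)+u_{\epsilon n}'(t)})\jump{w_n}$ and $g(t)\,\bm\alpha_\epsilon(\jump{\bm u_{\epsilon\tau}'(t)})\cdot\jump{\bm w_\tau}$, and they are dominated uniformly for $s\in(0,1]$: for $\varphi_\epsilon$ by its $1$-Lipschitz property (dominating function $g(t)\,|\jump{\bm w_\tau}|\in L^1(\Gamma_c)$), and for $\psi_\epsilon$ by the mean value theorem together with $|\beta_\epsilon(\xi)|\le\epsilon^{-1}\xi^2$, which yields a dominating function of the form $\epsilon^{-1}|\jump{w_n}|\big(|\jump{\gamma u_{\epsilon n}(t)+u_{\epsilon n}'(t)}|+|\jump{w_n}|\big)^2$, integrable over $\Gamma_c$ by the trace embedding $\bm H^1(\Omega_c)\hookrightarrow\bm L^4(\Gamma_c)$ and H\"older's inequality. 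Dominated convergence then gives \eref{eq: VE epsilon} with ``$=$'' weakened to ``$\ge$''; applying the same argument to $-\bm w$ gives the reverse inequality, hence \eref{eq: VE epsilon}. The only genuine --- though still routine --- point is this interchange of limit and boundary integral, i.e.\ the G\^ateaux differentiability on $\bm V$ of the convex boundary functionals $\bm v\mapsto(1,\psi_\epsilon(\jump{v_n}))_{\Gamma_c}$ and $\bm v\mapsto(g(t),\varphi_\epsilon(\jump{\bm v_\tau}))_{\Gamma_c}$, which is secured precisely by the growth bounds on $\beta_\epsilon$ and the trace estimates just used.
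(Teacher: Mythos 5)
Your proposal is correct and follows essentially the same route as the paper: for \textbf{(VE)$_\epsilon$} $\Rightarrow$ \textbf{(VI)$_\epsilon$} you use the convexity (subgradient) inequalities for $\psi_\epsilon,\varphi_\epsilon$, and for \textbf{(VI)$_\epsilon$} $\Rightarrow$ \textbf{(VE)$_\epsilon$} you take directional perturbations $\bm v=\gamma\bm u_\epsilon+\bm u_\epsilon'+s\bm w$, divide by $s$, and pass to the limit $s\to0$. The only addition is that you spell out the dominated-convergence justification (via the growth bound on $\beta_\epsilon$, the Lipschitz bound on $\varphi_\epsilon$, and the trace embedding into $\bm L^4(\Gamma_c)$) for the interchange of limit and boundary integral, a point the paper states without elaboration.
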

\begin{proof}
	Although the proof is standard, we present it for completeness.
	Let $\bm u_\epsilon$ be a solution of \textbf{(VI)$_\epsilon$}.
	Taking $\bm v = \pm h \bm w + \gamma \bm u_\epsilon(t) + \bm u_\epsilon'(t))$ with arbitrary $h > 0$ and $\bm w \in \bm V$, dividing by $h$, and letting $h \to 0$, we deduce \textbf{(VE)$_\epsilon$} from the relations
	\begin{align*}
		\lim_{h \to 0} \frac{ \psi_\epsilon(\jump{h w_n + \gamma u_{\epsilon n}(t) + u_{\epsilon n}'(t)}) - \psi_\epsilon(\jump{\gamma u_{\epsilon n}(t) + u_{\epsilon n}'(t)}) }{h} &= \beta_\epsilon(\jump{\gamma u_{\epsilon n}(t) + u_{\epsilon n}'(t)}) \, \jump{w_n}, \\
		\lim_{h \to 0} \frac{ \varphi_\epsilon(\jump{h \bm w_\tau + \bm u_{\epsilon\tau}'(t)}) - \varphi_\epsilon(\jump{\bm u_{\epsilon\tau}'(t)}) }{h} &= \bm\alpha_\epsilon(\jump{\bm u_{\epsilon\tau}'(t)}) \cdot \jump{\bm w_\tau}.
	\end{align*}
	
	Conversely, let $\bm u_\epsilon$ be a solution of \textbf{(VE)$_\epsilon$}.
	Notice that, since $\psi_\epsilon$ and $\varphi_\epsilon$ are convex,
	\begin{align*}
		\psi_\epsilon(\jump{w_n + \gamma u_{\epsilon n}(t) + u_{\epsilon n}'(t)}) - \psi_\epsilon(\jump{\gamma u_{\epsilon n}(t) + u_{\epsilon n}'(t)}) &\ge \beta_\epsilon(\jump{\gamma u_{\epsilon n}(t) + u_{\epsilon n}'(t)}) \, \jump{w_n}, \\
		\varphi_\epsilon(\jump{\bm w_\tau + \bm u_{\epsilon\tau}'(t)}) - \varphi_\epsilon(\jump{\bm u_{\epsilon\tau}'(t)}) &\ge \bm\alpha_\epsilon(\jump{\bm u_{\epsilon\tau}'(t)}) \cdot \jump{\bm w_\tau},
	\end{align*}
	for all $\bm w \in \bm V$.
	Setting this $\bm w$ in such a way that $\bm w + \gamma \bm u_{\epsilon}(t) + \bm u_{\epsilon}'(t) = \bm v$ and using \eref{eq: VE epsilon}, we arrive at \eref{eq: VI epsilon}.
\end{proof}

As a result of \pref{prop: VI and VE are equivalent}, it suffices to solve an equation problem for obtaining $\bm u_\epsilon$.
Furthermore, since it follows from \eref{eq: VE epsilon} that
\begin{equation*}
	\sigma_n(\bm u_\epsilon) = \beta_\epsilon(\jump{\gamma u_{\epsilon n} + u'_{\epsilon n}}), \quad \bm\sigma_\tau(\bm u_\epsilon) = \bm\alpha_\epsilon(\jump{\bm u_{\epsilon\tau}'}) \quad\text{on}\quad (0, T) \times \Gamma_c,
\end{equation*}
we expect that $\bm u_\epsilon$ should converge to a solution of the original problem \eref{eq: strong form} as $\epsilon \to 0$.
Justification of this fact, which is actually the idea to prove \tref{thm: main}, is the task of the next section.

\section{Proof of main result} \label{sec4}
We establish existence in Sections \ref{sec4.1}--\ref{sec4.4} and uniqueness in \sref{sec4.5}.
Coercivity of $a(\cdot, \cdot)$ in $\bm V$, that is, 
\begin{equation*}
	a(\bm v, \bm v) \ge C \|\bm v\|_{H^1(\Omega_c)}^2 \qquad \forall \bm v \in \bm V,
\end{equation*}
which is justified by Korn's inequality (see e.g.\ \cite{DuvLio1976}), will be frequently used in the proof.
Here and in what follows, $C$ represents a generic constant depending only on the domain $\Omega_c$, Lam\'e constants $\lambda, \mu$, and density $\rho$.
We will also write $C(\bm f, g)$ etc.\ in order to indicate dependency on other quantities.

The inequality above allows us to define the norm of $\bm V$ as $\|\bm v\|_{\bm V} := \bm a(\bm v, \bm v)^{1/2}$, whereas we use $\|\bm v\|_{\bm H} := \|\bm v\|_{\bm L^2(\Omega_c)}$.

\subsection{Galerkin approximation} \label{sec4.1}
We apply Galerkin's method to solve \eref{eq: VE epsilon}.
Since $\bm V \subset \bm H^1(\Omega_c)$ is separable, there exist countable members $\bm w_1, \bm w_2, \cdots \in \bm V$, which are linearly independent, such that $\overline{ \bigcup_{m=1}^\infty \bm V_m } = \bm V$ where $\bm V_m := \operatorname{span}\{\bm w_k\}_{k=1}^m$.
We may assume that $\bm u_0, \dot{\bm u}_0 \in \bm V_m$ for $m \ge 2$ (otherwise one can add $\bm u_0$ and $\dot{\bm u}_0$ to the members $\{\bm w_k\}_{k=1}^m$). 

For $m = 2, 3, \dots$, the Galerkin approximation problem consists in determining $c_k(t) \, (k=1, \dots, m)$ such that $\bm u_m = \sum_{k=1}^m c_k(t) \bm w_k(\bm x) \in \bm V_m$ satisfies
\begin{equation} \label{eq: Galerkin 1}
	\begin{aligned}
		&\rho (\bm u_m''(t), \bm v) + a(\bm u_m(t), \bm v) + \big( \beta_\epsilon( \jump{\gamma u_{m n}(t) + u_{m n}'(t)} ), \jump{v_n} \big)_{\Gamma_c} + \big( g(t) \bm\alpha_\epsilon( \jump{\bm u_{m\tau}'(t)} ), \jump{\bm v_\tau} \big)_{\Gamma_c} \\
		= \; & \rho(\bm f(t), \bm v) + (\bm F(t), \bm v)_{\Gamma_N} \qquad \forall \bm v \in \bm V_m, \quad \forall t \in (0, T),
	\end{aligned}
\end{equation}
together with the initial conditions $\bm u_m(0) = \bm u_0, \bm u_m'(0) = \dot{\bm u}_0$.

This is a finite-dimensional system of ODEs that admits a local-in-time unique solution $\{c_k \in W^{3, \infty}(0, \tilde T) \cap C^2([0, \tilde T])\}_{k=1}^m$ for certain $0 < \tilde T \le T$ (recall that $\beta_\epsilon, \alpha_\epsilon$ are $W^{2, \infty} \cap C^1$).
Because the \textit{a priori} estimates below ensure that $\tilde T$ can be extended to $T$, we use $T$ instead of $\tilde T$ from the beginning.

Differentiating \eref{eq: Galerkin 1} in $t$ we obtain
\begin{equation} \label{eq: Galerkin 2}
	\begin{aligned}
		&\rho (\bm u_m'''(t), \bm v) + a(\bm u_m'(t), \bm v) + \big( \beta_\epsilon'( \jump{\gamma u_{m n}(t) + u_{m n}'(t)} ) \jump{\gamma u_{m n}'(t) + u_{m n}''(t)}, \jump{v_n} \big)_{\Gamma_c} \\
		&\qquad + \big( g'(t) \bm\alpha_\epsilon( \jump{\bm u_{m\tau}'(t)} ), \jump{\bm v_\tau} \big)_{\Gamma_c} + \big( g(t) \nabla \bm\alpha_\epsilon( \jump{\bm u_{m\tau}'(t)}) \jump{\bm u_{m\tau}''(t)}, \jump{\bm v_\tau} \big)_{\Gamma_c} \\
		= \; & \rho(\bm f'(t), \bm v) + (\bm F'(t), \bm v)_{\Gamma_N} \qquad \forall \bm v \in \bm V_m, \quad \forall t \in (0, T).
	\end{aligned}
\end{equation}

\subsection{First \emph{a priori} estimate}
Let us establish an estimate for $\bm u_m \in W^{1, \infty}(0, T; \bm H) \cap L^\infty(0, T; \bm V)$.
For arbitrary $t \in (0, T)$ take $\bm v = \gamma \bm u_m + \bm u_m' \in \bm V_m$ in \eref{eq: Galerkin 1} to obtain
\begin{align*}
	&\frac12 \frac{d}{dt} \big( \rho \|\bm u_m'(t)\|_{\bm H}^2 + \|\bm u_m(t)\|_{\bm V}^2 \big) + \gamma \|\bm u_m(t)\|_{\bm V}^2 + \frac{1}{\epsilon} \big\| \jump{\gamma u_{m n}(t) + u_{m n}'(t)}_{-} \big\|_{L^3(\Gamma_c)}^3 + \rho \gamma (\bm u_m''(t), \bm u_m(t)) \\
	\le \;& \rho(\bm f(t), \gamma \bm u_m(t) + \bm u_m'(t)) + (\bm F(t), \gamma \bm u_m(t) + \bm u_m'(t))_{\Gamma_N} - \gamma \big( g(t) \bm\alpha_\epsilon( \jump{\bm u_{m\tau}'(t)} ), \jump{\bm u_{m\tau}(t)} \big)_{\Gamma_c},
\end{align*}
where $\jump{v}_{-}$ means $[\jump{v}]_{-}$ and we have used $\beta_\epsilon(x)x = \frac1\epsilon [x]_{-}^3, \bm\alpha_\epsilon(\bm x) \cdot \bm x \ge 0$.
Applying H\"older's and Young's inequalities to terms involving $\gamma$ on the right-hand side yields
\begin{align*}
	&\frac12 \frac{d}{dt} \big( \rho \|\bm u_m'(t)\|_{\bm H}^2 + \|\bm u_m(t)\|_{\bm V}^2 \big) + \frac\gamma2 \|\bm u_m(t)\|_{\bm V}^2 + \frac{1}{\epsilon} \big\| \jump{\gamma u_{m n}(t) + u_{m n}'(t)}_{-} \big\|_{L^3(\Gamma_c)}^3 + \rho \gamma (\bm u_m''(t), \bm u_m(t)) \\
	\le \;& C\gamma( \|\bm f(t)\|_{\bm H}^2 + \|\bm F(t)\|_{\bm L^2(\Gamma_N)}^2 + \|g(t)\|_{L^2(\Gamma_c)}^2) + \rho(\bm f(t), \bm u_m'(t)) + (\bm F(t), \bm u_m'(t))_{\Gamma_N},
\end{align*}
where we have used $|\bm\alpha_\epsilon(\cdot)| \le 1$ and the trace inequality $\|\jump{\bm v}\|_{\bm L^2(\Gamma_c)} \le C\|\bm v\|_{\bm V}$.
Integration of the both sides with respect to $t$ gives
\begin{align*}
	&\frac12 \big( \rho \|\bm u_m'(t)\|_{\bm H}^2 + \|\bm u_m(t)\|_{\bm V}^2 \big) + \frac\gamma2 \int_0^t \|\bm u_m(s)\|_{\bm V}^2 \, ds + \frac{1}{\epsilon} \int_0^t \big\| \jump{\gamma u_{m n}(s) + u_{m n}'(s)}_{-} \big\|_{L^3(\Gamma_c)}^3 \, ds \\
		&\qquad + \rho \gamma [(\bm u_m'(s), \bm u_m(s))]_{0}^{t} - \rho \gamma \int_0^t \|\bm u_m'(s)\|_{\bm H}^2 \, ds \\
	\le \;& \frac12 \big( \rho \|\dot{\bm u}_0\|_{\bm H}^2 + \|\bm u_0)\|_{\bm V}^2 \big)
		+ C\gamma( \|\bm f\|_{L^2(0, T; \bm H)}^2 + \|\bm F\|_{L^2(0, T; \bm L^2(\Gamma_N))}^2 + \|g\|_{L^2(0, T; L^2(\Gamma_c))}^2) \\
		&\qquad + \frac\rho2 \|\bm f\|_{L^2(0, T; \bm H)}^2 + \frac\rho2 \int_0^t \|\bm u_m'(s)\|_{\bm H}^2 \, ds
		+ [(\bm F(s), \bm u_m(s))_{\Gamma_N}]_{0}^{t} - \int_0^t (\bm F'(s), \bm u_m(s))_{\Gamma_N} \, ds.
\end{align*}
In particular,
\begin{align*}
	&\rho \|\bm u_m'(t)\|_{\bm H}^2 + \frac12 \|\bm u_m(t)\|_{\bm V}^2 + \rho \gamma \frac{d}{dt} \|\bm u_m(t)\|_{\bm H}^2
	+ \frac{2}{\epsilon} \int_0^t \big\| \jump{\gamma u_{m n}(s) + u_{m n}'(s)}_{-} \big\|_{L^3(\Gamma_c)}^3 \, ds \\
	\le \;& C (\gamma + 1) ( \|\bm f\|_{L^2(0, T; \bm H)}^2 + \|\bm F\|_{H^1(0, T; \bm L^2(\Gamma_N))}^2 + \|g\|_{L^2(0, T; L^2(\Gamma_c))}^2+ \|\bm u_0\|_{\bm V}^2 + \|\dot{\bm u}_0\|_{\bm H}^2) \\
	&\qquad + C(\gamma + 1) \int_0^t \Big( \rho \|\bm u_m'(s)\|_{\bm H}^2 + \frac12 \|\bm u_m(s)\|_{\bm V}^2 \Big) \, ds,
\end{align*}
where $(\bm F(t), \bm u_m(t))_{\Gamma_N}$ has been bounded by $C\|\bm F\|_{H^1(0, T; \bm L^2(\Gamma_N))}^2 + \frac14 \|\bm u_m(t)\|_{\bm V}^2$.
Setting $A(t) := \rho \|\bm u_m'(t)\|_{\bm H}^2 + \frac12 \|\bm u_m(t)\|_{\bm V}^2$ and neglecting the last term on the left-hand side (this is just for simplicity of presentation; if we keep this term, we obtain \eref{eq2: a priori estimate 1} below), we rephrase this estimate as
\begin{equation} \label{eq1: a priori estimate 1}
	A(t) + \rho \gamma \frac{d}{dt} \|\bm u_m(t)\|_{\bm H}^2 \le C_1(\bm f, \bm F, g, \bm u_0, \dot{\bm u}_0)(\gamma + 1) + C(\gamma + 1) \int_0^t A(s) \, ds \qquad \forall t \in (0, T). 
\end{equation}

If $\gamma = 0$, we find from Gronwall's inequality that
\begin{equation*}
	A(t) \le C_1(\bm f, \bm F, g, \bm u_0, \dot{\bm u}_0) e^{Ct}.
\end{equation*}
Otherwise we further integrate \eref{eq1: a priori estimate 1} with respect to $t$, with $B_1(t) := \int_0^t A(s) \, ds$, to get
\begin{equation*}
	 B_1(t) + \rho \gamma \|\bm u_m(t)\|_{\bm H}^2 \le C_2(\bm f, \bm F, g, \bm u_0, \dot{\bm u}_0, T)(\gamma + 1) + C(\gamma + 1 ) \int_0^t B_1(s) \, ds,
\end{equation*}
so that, by Gronwall's inequality,
\begin{equation*}
	B_1(t) + \rho \gamma \|\bm u_m(t)\|_{\bm H}^2 \le C_2(\bm f, \bm F, g, \bm u_0, \dot{\bm u}_0, T)(\gamma + 1) e^{C(\gamma + 1) t}.
\end{equation*}
Since $\rho \gamma \frac{d}{dt} \|\bm u_m(t)\|_{\bm H}^2 = 2\rho \gamma (\bm u_m'(t), \bm u_m(t))$, we find from \eref{eq1: a priori estimate 1} that
\begin{equation*}
	A(t) \le C_1(\bm f, \bm F, g, \bm u_0, \dot{\bm u}_0)(\gamma + 1) + C(\gamma + 1) B_1(t) + \frac{\rho}{2} \|\bm u_m'(t)\|_{\bm H}^2 + 2 \rho \gamma^2 \|\bm u_m(t)\|_{\bm H}^2,
\end{equation*}
which concludes
\begin{equation} \label{eq: conclusion of a priori estimate 1}
	\frac12 \big( \rho\|\bm u_m'(t)\|_{\bm H}^2 + \|\bm u_m(t)\|_{\bm V}^2 \big) \le C_3(\bm f, \bm F, g, \bm u_0, \dot{\bm u}_0, T) (\gamma + 1)^2 e^{C(\gamma + 1) t}.
\end{equation}

\begin{rem}
	As we already noticed before \eref{eq1: a priori estimate 1}, it also holds that, for all $t \in [0, T]$,
	\begin{equation} \label{eq2: a priori estimate 1}
		\frac{2}{\epsilon} \int_0^t \big\| \jump{\gamma u_{m n}(s) + u_{m n}'(s)}_{-} \big\|_{L^3(\Gamma_c)}^3 \, ds \le C_3(\bm f, \bm F, g, \bm u_0, \dot{\bm u}_0, T) (\gamma + 1)^2 e^{C(\gamma + 1) t}.
	\end{equation}
\end{rem}

\subsection{Second \emph{a priori} estimate}
Next let us establish an estimate for $\bm u_m' \in W^{1, \infty}(0, T; \bm H) \cap L^\infty(0, T; \bm V)$.
For arbitrary $t \in (0, T)$ we take $\bm v = \gamma \bm u_{m}' + \bm u_m'' \in \bm V_m$ in \eref{eq: Galerkin 2} to obtain
\begin{align*}
	&\frac12 \frac{d}{dt} \big( \rho \|\bm u_m''(t)\|_{\bm H}^2 + \|\bm u_m'(t)\|_{\bm V}^2 \big) + \gamma \|\bm u_m'(t)\|_{\bm V}^2 + \gamma \rho (\bm u_m'''(t), \bm u_m'(t)) \\
	\le \;& \gamma \rho (\bm f'(t), \bm u_m'(t)) + \gamma (\bm F'(t), \bm u_m'(t))_{\Gamma_N}
		+ \rho (\bm f'(t), \bm u_m''(t)) + (\bm F'(t), \bm u_m''(t))_{\Gamma_N} \\
		&\quad - \gamma \big( g(t) \nabla \bm\alpha_\epsilon( \jump{\bm u_{m\tau}'(t)}) \jump{\bm u_{m\tau}''(t)}, \jump{\bm u_{m\tau}'(t)} \big)_{\Gamma_c}
		- \gamma \big( g'(t) \bm\alpha_\epsilon( \jump{\bm u_{m\tau}'(t)} ), \jump{\bm u_{m\tau}'} \big)_{\Gamma_c}
		- \big( g'(t) \bm\alpha_\epsilon( \jump{\bm u_{m\tau}'(t)} ), \jump{\bm u_{m\tau}''} \big)_{\Gamma_c},
\end{align*}
where we have used the fact that $\beta_\epsilon'$ and $\nabla\bm\alpha_\epsilon$ are non-negative.

Applying H\"older's and Young's inequalities to the first three and the sixth terms on the right-hand side, together with $|\bm\alpha_\epsilon(\cdot)| \le 1$ and the trace inequality $\|\jump{\bm v}\|_{\bm L^2(\Gamma_c)} \le C\|\bm v\|_{\bm V}$, we have
\begin{align*}
	&\frac12 \frac{d}{dt} \big( \rho \|\bm u_m''(t)\|_{\bm H}^2 + \|\bm u_m'(t)\|_{\bm V}^2 \big) + \frac\gamma2 \|\bm u_m'(t)\|_{\bm V}^2 + \rho \gamma (\bm u_m'''(t), \bm u_m'(t)) \\
	\le \;& C(\gamma + 1) (\|\bm f'(t)\|_{\bm H}^2 + \|\bm F'(t)\|_{L^2(\Gamma_N)}^2 + \|g'(t)\|_{L^2(\Gamma_c)}^2) + \rho\|\bm u_m''(t)\|_{\bm H}^2 \\
	&\qquad + (\bm F'(t), \bm u_m''(t))_{\Gamma_N} - \gamma \Big( g(t) \frac{d}{dt} \bm\alpha_\epsilon( \jump{\bm u_{m\tau}'(t)}), \jump{\bm u_{m\tau}'(t)} \Big)_{\Gamma_c} - \Big( g'(t), \frac{d}{dt} \varphi_\epsilon( \jump{\bm u_{m\tau}'(t)} ) \Big)_{\Gamma_c}.
\end{align*}
Integration of the both sides with respect to $t$ yields
\begin{align*}
	&\frac12 ( \rho \|\bm u_m''(t)\|_{\bm H}^2 + \|\bm u_m'(t)\|_{\bm V}^2 ) + \frac\gamma2 \int_0^t \|\bm u_m'(s)\|_{\bm V}^2 \, ds
		+ \rho \gamma \big[ (\bm u_m''(s), \bm u_m'(s)) \big]_0^t - \rho \gamma \int_0^t \|\bm u_m''(s)\|_{\bm H}^2 \, ds \\
	\le \;& \frac12 (\rho \|\bm u_m''(0)\|_{\bm H}^2 + \|\dot{\bm u}_0\|_{\bm V}^2)
		+ C(\gamma + 1) (\|\bm f'\|_{L^2(0, T; \bm H)}^2 + \|\bm F'\|_{L^2(0, T; \bm L^2(\Gamma_N))}^2 + \|g'\|_{L^2(0, T; L^2(\Gamma_c))}^2)
		+ \int_0^t \rho\|\bm u_m''(s)\|_{\bm H}^2 \, ds \\
	&\quad + \big[ (\bm F'(s), \bm u_m'(s))_{\Gamma_N} \big]_0^t - \int_0^t (\bm F''(s), \bm u_m'(s))_{\Gamma_N} \, ds \\
	&\quad - \gamma \Big[ \big( g(s) \bm\alpha_\epsilon( \jump{\bm u_{m\tau}'(s)}), \jump{\bm u_{m\tau}'(s)} \big)_{\Gamma_c} \Big]_0^t
		+ \gamma \int_0^t \big( g'(s) \bm\alpha_\epsilon( \jump{\bm u_{m\tau}'(s)}), \jump{\bm u_{m\tau}'(s)} \big)_{\Gamma_c} \, ds \\
	& \hspace{6.3cm} + \gamma \int_0^t \big( g(s), \underbrace{ \bm\alpha_\epsilon( \jump{\bm u_{m\tau}'(s)}) \jump{\bm u_{m\tau}''(s)} }_{ = \frac{d}{ds} \varphi_\epsilon(\jump{\bm u_{m\tau}'(s)}) } \big)_{\Gamma_c} \, ds \\[-2mm]
	&\quad - \big[ \big( g'(s), \varphi_\epsilon( \jump{\bm u_{m\tau}'(s)} \big)_{\Gamma_c} \big]_0^t + \int_0^t \big( g''(s), \varphi_\epsilon( \jump{\bm u_{m\tau}'(s)} \big)_{\Gamma_c} \, ds,
\end{align*}
where the eighth term on the right-hand side equals
\begin{align*}
	\gamma \big[ \big( g(s), \varphi_\epsilon(\jump{\bm u_{m\tau}'(s)}) \big)_{\Gamma_c} \big]_0^t - \gamma \int_0^t \big( g'(s), \varphi_\epsilon(\jump{\bm u_{m\tau}'(s)}) \big)_{\Gamma_c} \, ds.
\end{align*}
H\"older's and Young's inequalities, combined with the relations $H^1(0, T; L^2(\Gamma_c)) \hookrightarrow C([0, T]; L^2(\Gamma_c))$, $\|\jump{\bm v}\|_{\bm L^2(\Gamma_c)} \le C\|\bm v\|_{\bm V}$ and with $|\bm\alpha_\epsilon(\cdot)| \le 1$, $\varphi_{\epsilon}(\cdot) = \sqrt{|\cdot|^2 + \epsilon^2}$, lead to
\begin{equation} \label{eq1: a priori estimate 2}
\begin{aligned}
	&\rho \|\bm u_m''(t)\|_{\bm H}^2 + \frac12 \|\bm u_m'(t)\|_{\bm V}^2 + \gamma \int_0^t \|\bm u_m'(s)\|_{\bm V}^2 \, ds
		+ \rho \gamma \frac{d}{dt} \|\bm u_m'(t)\|_{\bm H}^2 \\
	\le \;& C(\gamma + 1) \big( \|\bm u_m''(0)\|_{\bm H}^2 + \|\bm f\|_{H^1(0, T; \bm H)}^2 + \|\bm F\|_{H^2(0, T; \bm L^2(\Gamma_N))}^2 + \|g\|_{H^2(0, T; L^2(\Gamma_c))}^2 + \|\dot{\bm u}_0\|_{\bm V}^2 + \epsilon^2 \big) \\
	& + C(\gamma + 1) \int_0^t (\rho\|\bm u_m''(s)\|_{\bm H}^2 + \frac12 \|\bm u_m'(s)\|_{\bm V}^2) \, ds + C\gamma^2 \|g\|_{H^1(0, T; L^2(\Gamma_c))}^2 \qquad \forall t \in (0, T),
\end{aligned}
\end{equation}
where the last contribution owes to $\gamma (g(t) \bm\alpha_\epsilon( \jump{\bm u_{m\tau}'(t)}), \jump{\bm u_{m\tau}'(t)})_{\Gamma_c}$ and $\gamma ( g(t), \varphi_\epsilon(\jump{\bm u_{m\tau}'(t)}) )_{\Gamma_c}$.

It remains to estimate $\|\bm u''(0)\|_{\bm H}$.
For this purpose we make $t = 0$ and take $\bm v = \bm u''(0) \in \bm V_m$ in \eref{eq: Galerkin 1} to see
\begin{align*}
	&\rho \|\bm u_m''(0)\|_{\bm H}^2 + a(\bm u_0, \bm u_m''(0)) + \big( \beta_\epsilon(\jump{ \gamma u_{0n} + \dot{u}_{0n} }), \jump{u_{mn}''(0)} \big)_{\Gamma_c}
		+ \big( g(0) \bm\alpha_\epsilon( \jump{\dot{\bm u}_{0\tau}}), \jump{\bm u_{m\tau}''(0)} \big)_{\Gamma_c} \\
	= \;& (\rho \bm f(0), \bm u_m''(0)) + (\bm F(0), \bm u_m''(0))_{\Gamma_N}.
\end{align*}
Noting that
\begin{align*}
	a(\bm u_0, \bm u_m''(0)) = (-\operatorname{div} \bm\sigma(\bm u_0), \bm u_m''(0)) + \big( \bm\sigma(\bm u_0)\bm n, \bm u_m''(0) \big)_{\Gamma_N}
		- (\sigma_n(\bm u_0), \jump{u_{mn}''(0)})_{\Gamma_c} - (\bm\sigma_\tau(\bm u_0), \jump{\bm u_{m\tau}''(0)})_{\Gamma_c}
\end{align*}
and using the compatibility conditions, we deduce
\begin{equation*}
	\rho \|\bm u_m''(0)\|_{\bm H}^2 = \big( \operatorname{div} \bm\sigma(\bm u_0) + \rho \bm f(0), \bm u_m''(0) \big),
\end{equation*}
which implies $\|\bm u_m''(0)\|_{\bm H} \le C(\|\operatorname{div} \bm\sigma(\bm u_0)\|_{\bm H} + \|\bm f(0)\|_{\bm H})$.

Substituting this into \eref{eq1: a priori estimate 2}, we proceed as in the previous subsection assuming $\epsilon \le 1$.
If $\gamma = 0$, Gronwall's inequality gives us
\begin{equation*}
	\rho \|\bm u_m''(t)\|_{\bm H}^2 + \frac12 \|\bm u_m'(t)\|_{\bm V}^2 \le C_4(\bm f, \bm F, g, \bm u_0, \dot{\bm u}_0) e^{Ct}.
\end{equation*}
If $\gamma > 0$, we further integrate \eref{eq1: a priori estimate 2} to have
\begin{equation*}
	B_2(t) + \rho \gamma \|\bm u_m'(t)\|_{\bm H}^2 \le C_5(\bm f, \bm F, g, \bm u_0, \dot{\bm u}_0, T) (\gamma+1)^2 + C(\gamma + 1)\int_0^t B_2(s) \, ds,
\end{equation*}
where $B_2(t) := \int_0^t (\rho \|\bm u_m''(t)\|_{\bm H}^2 + \frac12 \|\bm u_m'(t)\|_{\bm V}^2) \, ds$.
Applying Gronwall's inequality above and substituting the resulting estimate into \eref{eq1: a priori estimate 2}, in which $2\rho \gamma |(\bm u_m''(t), \bm u_m'(t))|$ is bounded by $\frac\rho2 \|\bm u_m''(t)\|_{\bm H}^2 + 2\rho \gamma^2 \|\bm u_m'(t)\|_{\bm H}^2$, we conclude
\begin{equation} \label{eq: conclusion of a priori estimate 2}
	\rho \|\bm u_m''(t)\|_{\bm H}^2 + \|\bm u_m'(t)\|_{\bm V}^2 \le C_6(\bm f, \bm F, g, \bm u_0, \dot{\bm u}_0, T) (\gamma + 1)^3 e^{C(\gamma + 1)t}.
\end{equation}

\subsection{Passage to limit} \label{sec4.4}
The argument of the passage to the limits $m \to \infty$ and $\epsilon \to 0$ is basically similar to \cite[Section 3.7]{ItoKas2021}, the essential difference lying in the verification of the constraint $\gamma\bm u(t) + \bm u'(t) \in \bm K$.
However, for the sake of completeness we present the whole proof.

First let us consider the limit $m \to \infty$ for fixed $\epsilon \in (0, 1]$.
As a consequence of the \emph{a priori} estimates \eref{eq: conclusion of a priori estimate 1} and \eref{eq: conclusion of a priori estimate 2}, there exist a subsequence of $\{\bm u_m\}$, denoted by the same symbol, and some $\bm u_\epsilon \in W^{2,\infty}(0, T; \bm H) \cap W^{1, \infty}(0, T; \bm V)$ such that
\begin{align*}
	\bm u_{m} \rightharpoonup \bm u_\epsilon &\quad\text{weakly-$*$ in}\quad L^\infty(0, T; \bm V), \\
	\bm u_{m}' \rightharpoonup \bm u_\epsilon' &\quad\text{weakly-$*$ in}\quad L^\infty(0, T; \bm V), \\
	\bm u_{m}'' \rightharpoonup \bm u_\epsilon'' &\quad\text{weakly-$*$ in}\quad L^\infty(0, T; \bm H),
\end{align*}
as $m \to \infty$.
Here, we notice the compact embedding $W^{1, \infty}(0, T; L^2(\Omega_{\pm})) \cap L^\infty(0, T; H^1(\Omega_{\pm})) \hookrightarrow C([0, T]; L^2(\Omega_{\pm}))$ (see \cite{Simon1986})
and the compactness of the trace operator $H^1(\Omega_{\pm}) \to L^3(\Gamma_c)$ (see e.g.\ \cite{Nec12}).
It then follows that
\begin{align}
	\bm u_m \to \bm u_\epsilon \quad\text{and}\quad \bm u_m' \to \bm u_\epsilon' &\quad\text{strongly in}\quad C([0, T]; \bm H), \notag \\
	\jump{\bm u_m} \to \jump{\bm u_\epsilon} \quad\text{and}\quad \jump{\bm u_m'} \to \jump{\bm u_\epsilon'} &\quad\text{strongly in}\quad C([0, T]; \bm L^3(\Gamma_c)), \label{eq1: passge}
\end{align}
as $m \to \infty$.
In particular, the initial conditions $\bm u_\epsilon(0) = \bm u_0$ and $\bm u_\epsilon'(0) = \dot{\bm u}_0$ hold.
By choosing a further subsequence, we may also assume that
\begin{equation*}
	\jump{\bm u_m} \to \jump{\bm u_\epsilon} \quad\text{and}\quad \jump{\bm u_m'} \to \jump{\bm u_\epsilon'} \quad\text{a.e.\ in}\quad (0, T) \times \Gamma_c.
\end{equation*}

For arbitrary $\eta \in C_0^\infty(0, T)$ and $\bm v \in \bm V_m \, (m = 2, 3, \dots)$, we find from \eref{eq: Galerkin 1} that
\begin{align*}
	&\int_0^T \eta(t) \Big(
		\rho (\bm u_m''(t), \bm v) + a(\bm u_m(t), \bm v)
		+ \big( \beta_\epsilon( \jump{\gamma u_{m n}(t) + u_{m n}'(t)} ), \jump{v_n} \big)_{\Gamma_c}
		+ \big( g(t) \bm\alpha_\epsilon( \jump{\bm u_{m\tau}'(t)} ), \jump{\bm v_\tau} \big)_{\Gamma_c} \\
		&\hspace{3cm} - \rho(\bm f(t), \bm v) - (\bm F(t), \bm v)_{\Gamma_N}
	\Big) \, dt = 0.
\end{align*}
Letting $m \to \infty$, using \eref{eq1: passge}, and applying the dominated convergence theorem, we have
\begin{align*}
	&\int_0^T \eta(t) \Big(
		\rho (\bm u_\epsilon''(t), \bm v) + a(\bm u_\epsilon(t), \bm v)
		+ \big( \beta_\epsilon( \jump{\gamma u_{\epsilon n}(t) + u_{\epsilon n}'(t)} ), \jump{v_n} \big)_{\Gamma_c}
		+ \big( g(t) \bm\alpha_\epsilon( \jump{\bm u_{\epsilon\tau}'(t)} ), \jump{\bm v_\tau} \big)_{\Gamma_c} \\
		&\hspace{3cm} - \rho(\bm f(t), \bm v) - (\bm F(t), \bm v)_{\Gamma_N}
	\Big) \, dt = 0.
\end{align*}
Since $\overline{ \bigcup_{m=1}^\infty \bm V_m } = \bm V$ and $\eta$ is arbitrary, we conclude \eref{eq: VE epsilon}, that is, $\bm u_\epsilon$ is a solution of \textbf{(VE)$_\epsilon$} and also of \textbf{(VI)$_\epsilon$} by virtue of \pref{prop: VI and VE are equivalent}.
Moreover, by making $m \to \infty$ in \eref{eq: conclusion of a priori estimate 1}, \eref{eq2: a priori estimate 1}, and \eref{eq: conclusion of a priori estimate 2}, we also obtain
\begin{equation} \label{eq2: passage}
	\|\bm u_\epsilon\|_{W^{2, \infty}(0, T; \bm H)}^2 + \|\bm u_\epsilon\|_{W^{1, \infty}(0, T; \bm V)}^2
		+ \frac{1}{\epsilon} \int_0^T \big\| \jump{\gamma u_{\epsilon n}(s) + u_{\epsilon n}'(s)}_{-} \big\|_{L^3(\Gamma_c)}^3 \, ds 
	\le C(\bm f, \bm F, g, \bm u_0, \dot{\bm u}_0, T, \gamma).
\end{equation}

Next we consider the limit $\epsilon \to 0$.
By \eref{eq2: passage}, there exist a subsequence of $\{\bm u_\epsilon\}$, denoted by the same symbol, and some $\bm u \in W^{2,\infty}(0, T; \bm H) \cap W^{1, \infty}(0, T; \bm V)$ such that
\begin{align*}
	\bm u_\epsilon \rightharpoonup \bm u &\quad\text{weakly-$*$ in}\quad L^\infty(0, T; \bm V), \\
	\bm u_\epsilon' \rightharpoonup \bm u' &\quad\text{weakly-$*$ in}\quad L^\infty(0, T; \bm V), \\
	\bm u_\epsilon'' \rightharpoonup \bm u'' &\quad\text{weakly-$*$ in}\quad L^\infty(0, T; \bm H),
\end{align*}
as $\epsilon \to \infty$.
We observe from the third term on the left-hand side of \eref{eq2: passage} that
\begin{equation*}
	\int_0^T \big\| \jump{\gamma u_{n}(t) + u_{n}'(t)}_{-} \big\|_{L^3(\Gamma_c)}^3 \, dt 
		= \lim_{\epsilon\to0} \int_0^T \big\| \jump{\gamma u_{\epsilon n}(t) + u_{\epsilon n}'(t)}_{-} \big\|_{L^3(\Gamma_c)}^3 \, dt = 0,
\end{equation*}
which verifies $\jump{\gamma u_{n}(t) + u_{n}'(t)} \ge 0$ a.e.\ on $(0, T) \times \Gamma_c$, that is, $\gamma\bm u(t) + \bm u'(t) \in \bm K$ for $t \in (0, T)$.

For arbitrary $\tilde{\bm v} \in L^2(0, T; \bm K)$ we find from \eref{eq: VI epsilon} that
\begin{align*}
	&\int_0^T \Big( \rho (\bm u_\epsilon'', \tilde{\bm v} - (\gamma \bm u_\epsilon + \bm u_\epsilon')) + a(\bm u_\epsilon, \tilde{\bm v} - (\gamma \bm u_\epsilon + \bm u_\epsilon'))
	+ \big( g, \varphi_\epsilon( \jump{\tilde{\bm v}_\tau - \gamma \bm u_{\epsilon\tau}} ) - \varphi_\epsilon( \jump{\bm u_{\epsilon\tau}'} ) \big)_{\Gamma_c} \\
	&\qquad -\rho(\bm f, \tilde{\bm v} - (\gamma \bm u_\epsilon + \bm u_\epsilon')) - (\bm F, \tilde{\bm v} - (\gamma \bm u_\epsilon + \bm u_\epsilon'))_{\Gamma_N} \Big) \, dt \ge 0,
\end{align*}
because $\psi_\epsilon( \jump{\tilde v_n(t)} ) = 0$ and $\psi_\epsilon( \jump{\gamma u_{\epsilon n}(t) + u_{\epsilon n}'(t)} ) \ge 0$.
Consequently, 
\begin{equation} \label{eq3: passage}
\begin{aligned}
	&\int_0^T \Big( \rho (\bm u_\epsilon'', \tilde{\bm v} - \gamma \bm u_\epsilon) + a(\bm u_\epsilon, \tilde{\bm v})
	+ \big( g, \varphi_\epsilon( \jump{\tilde{\bm v}_\tau - \gamma \bm u_{\epsilon\tau}} ) - \varphi_\epsilon( \jump{\bm u_{\epsilon\tau}'} ) \big)_{\Gamma_c} \\
	&\hspace{5cm} -\rho(\bm f, \tilde{\bm v} - (\gamma \bm u_\epsilon + \bm u_\epsilon')) - (\bm F, \tilde{\bm v} - (\gamma \bm u_\epsilon + \bm u_\epsilon'))_{\Gamma_N} \Big) \, dt \\
	\ge \; &\int_0^T \big( \rho(\bm u_\epsilon'', \bm u_\epsilon') + a(\bm u_\epsilon, \bm u_\epsilon') + \gamma a(\bm u_\epsilon, \bm u_\epsilon) \big) \, dt \\
	= \; & \frac12 (\rho \|\bm u_\epsilon'(T)\|_{\bm H}^2 + \|\bm u_\epsilon(T)\|_{\bm V}^2)
		- \frac12 (\rho \|\dot{\bm u}_0\|_{\bm H}^2 + \|\bm u_0\|_{\bm V}^2)
		+ \gamma \|\bm u_\epsilon\|_{L^2(0, T; \bm V)}^2.
\end{aligned}
\end{equation}
Here, observe that $\|\bm u'(T)\|_{\bm H}^2 = \lim_{\epsilon\to0} \|\bm u_\epsilon'(T)\|_{\bm H}^2$ (recall the compact embedding $W^{1, \infty}(0, T; L^2(\Omega_{\pm})) \cap L^\infty(0, T; H^1(\Omega_{\pm})) \hookrightarrow C([0, T]; L^2(\Omega_{\pm}))$),
that $\varphi_\epsilon( \jump{\bm u_{\epsilon\tau}'} ) \to |\jump{\bm u_{\tau}'}|$ in $C([0, T]; L^2(\Gamma_c))$ as $\epsilon \to 0$, 
and that
\begin{equation*}
	\|\bm u(T)\|_{\bm V}^2 \le \liminf_{\epsilon\to0} \|\bm u_\epsilon(T)\|_{\bm V}^2, \qquad
	\|\bm u\|_{L^2(0, T; \bm V)}^2 \le \liminf_{\epsilon\to0} \|\bm u_\epsilon\|_{L^2(0, T; \bm V)}^2.
\end{equation*}
The former inequality above results from the following weak convergence:
\begin{equation*}
	a(\bm u(T) - \bm u_\epsilon(T), \bm w) = \int_0^T \big( a(\bm u'(t) - \bm u_\epsilon'(t), \eta(t)\bm w) + a(\bm u(t) - \bm u_\epsilon(t), \eta'(t)\bm w) \big) \, dt \to 0 \quad \forall \bm w \in \bm V, \; \epsilon \to 0,
\end{equation*}
where $\eta \in C^\infty([0, \infty])$ is chosen so that $\eta(0) = 0$ and $\eta(T) = 1$.
Therefore, making $\epsilon \to 0$ in \eref{eq3: passage} deduces
\begin{align*}
	&\int_0^T \Big( \rho (\bm u'', \tilde{\bm v} - \gamma \bm u) + a(\bm u, \tilde{\bm v})
		+ \big( g, |\jump{\tilde{\bm v}_\tau - \gamma \bm u_{\tau}}| ) - |\jump{\bm u_{\tau}'}| \big)_{\Gamma_c}
		-\rho(\bm f, \tilde{\bm v} - (\gamma \bm u + \bm u')) - (\bm F, \tilde{\bm v} - (\gamma \bm u + \bm u'))_{\Gamma_N} \Big) \, dt \\
	\ge \; &\int_0^T \big( \rho(\bm u'', \bm u') + a(\bm u, \bm u') + \gamma a(\bm u, \bm u) \big) \, dt,
\end{align*}
namely,
\begin{align*}
	&\int_0^T \Big( \rho (\bm u'', \tilde{\bm v} - (\gamma \bm u + \bm u')) + a(\bm u, \tilde{\bm v} - (\gamma \bm u + \bm u'))
	+ \big( g, |\jump{\tilde{\bm v}_\tau - \gamma \bm u_{\tau}}| ) - |\jump{\bm u_{\tau}'}| \big)_{\Gamma_c} \\
	&\qquad -\rho(\bm f, \tilde{\bm v} - (\gamma \bm u + \bm u')) - (\bm F, \tilde{\bm v} - (\gamma \bm u + \bm u'))_{\Gamma_N} \Big) \, dt \ge 0.
\end{align*}
This implies the pointwise (in time) variational inequality \eref{eq: VI} by a technique based on the Lebesgue differentiation theorem (see \cite[pp.\ 57--58]{DuvLio1976}).
Thus the existence part of \tref{thm: main} has been established.

\subsection{Uniqueness} \label{sec4.5}
Before proceeding to the proof of the uniqueness part of \tref{thm: main}, we present some preparatory results.
\begin{lem}
	There exists a vector function $\bm N \in \bm H^1(\Omega)$ such that its trace satisfies
	\begin{equation*}
		\bm N = \bm n \;\text{ on }\; \Gamma_c, \quad \bm N = \bm 0 \;\text{ on }\; \partial\Omega.
	\end{equation*}
\end{lem}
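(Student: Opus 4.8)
The plan is to construct $\bm N$ by solving an auxiliary elliptic boundary value problem on the whole domain $\Omega$ (without the crack), since $\bm n$ is smooth on $\Gamma$ and $\Gamma_c \Subset \Gamma$ is compactly contained away from $\partial\Gamma$. The key observation is that the desired trace data — namely $\bm n$ on $\Gamma_c$ and $\bm 0$ on $\partial\Omega$ — can be extended to a globally smooth (or at least $H^1$) field if we are careful near $\partial\Gamma_c$, because there the two prescriptions are not in conflict: $\Gamma_c$ and $\partial\Omega$ are disjoint closed sets.

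Concretely, first I would choose a cutoff function $\chi \in C_0^\infty(\Gamma)$ such that $\chi \equiv 1$ on $\overline{\Gamma_c}$ and $\operatorname{supp}\chi \Subset \Gamma \setminus \partial\Gamma$, which is possible precisely because $\overline{\Gamma_c} \subset \Gamma\setminus\partial\Gamma$. Since $\Gamma$ is a smooth closed surface and $\bm n$ is its smooth unit normal field, the product $\chi\bm n$ is a smooth $\mathbb R^3$-valued function on $\Gamma$ whose support is a compact subset of the interior of $\Gamma$. Next I would extend $\chi\bm n$ from the surface $\Gamma$ to a function $\bm N_0 \in \bm H^1(\Omega)$ (indeed one can take it smooth) supported in a tubular neighborhood of $\operatorname{supp}\chi$; this tubular neighborhood can be taken small enough to be disjoint from $\partial\Omega$, using again that $\operatorname{supp}(\chi\bm n)$ stays away from $\partial\Gamma$ and that $\Gamma$ meets $\partial\Omega$ only along $\partial\Gamma$ (in fact $\Gamma$ is an interior interface, so $\overline\Gamma \cap \partial\Omega$ is controlled). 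Then $\bm N := \bm N_0$ already satisfies $\bm N = \bm n$ on $\Gamma_c$ (where $\chi = 1$) and $\bm N = \bm 0$ on $\partial\Omega$ (because the support of $\bm N_0$ does not reach $\partial\Omega$), and $\bm N \in \bm H^1(\Omega)$. One can alternatively phrase the extension via the trace theorem: pick $\bm g \in \bm H^{1/2}(\partial\mathcal O)$ on the boundary of a suitable smooth subdomain $\mathcal O$ that contains $\Gamma_c$ in its interior and whose boundary contains a patch of $\Gamma$, solve the Dirichlet problem, then multiply by a cutoff; but the direct tubular-neighborhood construction is cleanest.

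The main obstacle — though a mild one — is the behavior near $\partial\Gamma_c$ and near $\overline\Gamma \cap \partial\Omega$: one must make sure the neighborhood on which $\bm N$ is supported does not simultaneously touch $\partial\Omega$, so that the two boundary prescriptions remain compatible and no singularity is forced. This is handled entirely by the geometric hypotheses already in place, namely $\overline\Gamma_c \subset \Gamma\setminus\partial\Gamma$ (so there is a definite positive distance from $\overline\Gamma_c$ to $\partial\Gamma$) together with smoothness of $\Gamma$ and $\partial\Omega$; hence the tube radius can be chosen small enough. Note also that $\bm N$ need only lie in $\bm H^1(\Omega)$, not in $\bm H^1(\Omega_c)$ with any jump condition, since it will be used merely as a fixed auxiliary multiplier in the uniqueness argument, and $\bm H^1(\Omega) \subset \bm H^1(\Omega_c)$ with $\jump{\bm N} = 0$ automatically. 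Therefore the construction is complete once the cutoff and the extension are set up, and no further regularity estimates are required.
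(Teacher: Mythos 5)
Your construction is correct and essentially the same as the paper's: both arguments cut $\bm n$ off near $\overline\Gamma_c$ using the hypothesis $\overline\Gamma_c \Subset \Gamma\setminus\partial\Gamma$ and then extend the resulting compactly supported field into the interior. The paper performs the extension abstractly via the trace theorem in $\bm H^{1/2}_{00}(\Gamma)$ on $\Omega_+$ and $\Omega_-$ separately and glues, whereas you use an explicit tubular-neighborhood extension (which in fact yields a smooth, compactly supported $\bm N$); this is a cosmetic rather than substantive difference.
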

\begin{proof}
	Let $\tilde\Gamma_c$ be a neighborhood of $\overline\Gamma_c$ such that $\Gamma_c \Subset \tilde\Gamma_c \Subset \Gamma$.
	Then there exists $\tilde{\bm n} \in \bm H^{1/2}_{00}(\Gamma)$ such that $\tilde{\bm n} = \bm n$ on $\Gamma_c$ and $\tilde{\bm n} = \bm 0$ on $\Gamma \setminus \tilde\Gamma_c$.
	Then one can find some $\bm N_{\pm} \in \bm H^1(\Omega_{\pm})$ whose trace to $\partial\Omega_{\pm}$ equals the zero extension of $\tilde{\bm n}$ to $\partial\Omega_{\pm}$.
	If we define $\bm N = \bm N^{+}$ in $\Omega_{+}$ and $\bm N = \bm N^{-}$ in $\Omega_{-}$, this is a desired function.
\end{proof}
Using this lemma we introduce, for $\bm v \in \bm V$,
\begin{equation*}
	\bar{\bm v} := \bm v - (\bm v\cdot\bm N)\bm N.
\end{equation*}
Note that $\|\bar{\bm v}\|_{\bm H} \le C \|\bm v\|_{\bm H}$, $\|\bar{\bm v}\|_{\bm V} \le C\|\bm v\|_{\bm V}$, and that $\jump{\bar{v}_n} = 0$, $\jump{\bar{\bm v}_\tau} = \jump{\bm v_\tau}$, $\jump{((\bm v \cdot \bm N) \bm N)_\tau} = \bm 0$ on $\Gamma_c$.

For any solution $\bm u$ of \eref{eq: VI}, we see that $\sigma_n(\bm u) \in L^2(0, T; H^{1/2}_{00}(\Gamma_c)^*)$ and $\bm\sigma_\tau(\bm u) \in L^2(0, T; \bm H^{1/2}_{00}(\Gamma_c)^*)$ are characterized by
\begin{align*}
	\left< \sigma_n(\bm u(t)), \jump{v_n} \right>_{\Gamma_c} &= -\rho (\bm u''(t), \bm v) - a(\bm u(t), \bm v) + \rho(\bm f(t), \bm v) + (\bm F(t), \bm v)_{\Gamma_N} \qquad \forall \bm v \in \bm V, \; \jump{\bm v_\tau} = \bm 0 \text{ on } \Gamma_c, \\
	\left< \bm\sigma_\tau(\bm u(t)), \jump{\bm v_\tau} \right>_{\Gamma_c} &= -\rho (\bm u''(t), \bm v) - a(\bm u(t), \bm v) + \rho(\bm f(t), \bm v) + (\bm F(t), \bm v)_{\Gamma_N} \qquad \forall \bm v \in \bm V, \; \jump{v_n} = 0 \text{ on } \Gamma_c,
\end{align*}
respectively.
The next lemma is essentially a consequence of the monotonicity of $\beta$ and $\bm\alpha$ appearing in \eref{eq: BC with subdifferential}.
\begin{lem} \label{lem: monotonicity}
	If $\bm u_1, \bm u_2$ are two solutions of \eref{eq: VI}, then for a.e.\ $t \in (0, T)$
	\begin{align*}
		\left< \sigma_n(\bm u_1(t)) - \sigma_n(\bm u_2(t)), \jump{\gamma u_{1n}(t) + u_{1n}'(t)} - \jump{\gamma u_{2n}(t) + u_{2n}'(t)} \right>_{\Gamma_c} &\ge 0, \\
		\left< \bm\sigma_\tau(\bm u_1(t)) - \bm\sigma_\tau(\bm u_2(t)), \jump{\bm u_{1\tau}'(t)} - \jump{\bm u_{2\tau}'(t)} \right>_{\Gamma_c} &\ge 0.
	\end{align*}
\end{lem}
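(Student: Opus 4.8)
The plan is to deduce the two inequalities from the monotonicity of the maximal monotone graphs $\beta$ and $g\bm\alpha$ behind \eref{eq: BC with subdifferential}. Because $\sigma_n(\bm u)$ and $\bm\sigma_\tau(\bm u)$ are only elements of $H^{1/2}_{00}(\Gamma_c)^*$ and $\bm H^{1/2}_{00}(\Gamma_c)^*$, this monotonicity has to be expressed through duality pairings. So I would first establish, for every solution $\bm u$ of \eref{eq: VI} and a.e.\ $t$, the dual-space version of \pref{prop: equivalence}: namely $\left< \sigma_n(\bm u(t)), \jump{v_n} \right>_{\Gamma_c} \le 0$ for all $\bm v \in \bm V$ with $\jump{v_n} \ge 0$ on $\Gamma_c$, $\left< \bm\sigma_\tau(\bm u(t)), \jump{\bm v_\tau} \right>_{\Gamma_c} \le \big( g(t), |\jump{\bm v_\tau}| \big)_{\Gamma_c}$ for all $\bm v \in \bm V$, together with the complementarity relations $\left< \sigma_n(\bm u(t)), \jump{\gamma u_n(t) + u_n'(t)} \right>_{\Gamma_c} = 0$ and $\left< \bm\sigma_\tau(\bm u(t)), \jump{\bm u_\tau'(t)} \right>_{\Gamma_c} = \big( g(t), |\jump{\bm u_\tau'(t)}| \big)_{\Gamma_c}$.

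To obtain these, I would mimic the proof of \pref{prop: equivalence}. The two characterizations of $\sigma_n(\bm u)$ and $\bm\sigma_\tau(\bm u)$ recalled above the lemma add up — thanks to the decomposition $\bm v = (\bm v\cdot\bm N)\bm N + \bar{\bm v}$ and the linearity of the right-hand side in $\bm v$ — to the identity $\left< \sigma_n(\bm u), \jump{w_n} \right>_{\Gamma_c} + \left< \bm\sigma_\tau(\bm u), \jump{\bm w_\tau} \right>_{\Gamma_c} = -\rho(\bm u'', \bm w) - a(\bm u, \bm w) + \rho(\bm f, \bm w) + (\bm F, \bm w)_{\Gamma_N}$ for all $\bm w \in \bm V$. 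Substituting the test function $\bm v = \gamma\bm u(t) + \bm u'(t) + \bm w$ into \eref{eq: VI} and inserting this identity yields the master inequality
\begin{equation*}
	-\left< \sigma_n(\bm u(t)), \jump{w_n} \right>_{\Gamma_c} - \left< \bm\sigma_\tau(\bm u(t)), \jump{\bm w_\tau} \right>_{\Gamma_c} + \big( g(t), |\jump{\bm u_\tau'(t) + \bm w_\tau}| - |\jump{\bm u_\tau'(t)}| \big)_{\Gamma_c} \ge 0,
\end{equation*}
valid for every $\bm w \in \bm V$ with $\jump{\gamma u_n(t) + u_n'(t) + w_n} \ge 0$ on $\Gamma_c$. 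Taking $\bm w = (\bm z\cdot\bm N)\bm N$ makes $\jump{\bm w_\tau} = \bm 0$ and $\jump{w_n} = \jump{z_n}$ and removes the $g$-term; letting $\jump{z_n}$ be an arbitrary nonnegative field, then $\pm\jump{\gamma u_n(t) + u_n'(t)}$, delivers the two relations for $\sigma_n(\bm u)$. Taking $\bm w = \bar{\bm z}$ makes $\jump{w_n} = 0$ and $\jump{\bm w_\tau} = \jump{\bm z_\tau}$ and renders the constraint automatic (since $\jump{\gamma u_n(t) + u_n'(t)} \ge 0$); letting $\jump{\bm z_\tau} + \jump{\bm u_\tau'(t)}$ be $\bm 0$, then $2\jump{\bm u_\tau'(t)}$, then an arbitrary tangential field, delivers the two relations for $\bm\sigma_\tau(\bm u)$.

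The conclusion is then a short computation. For two solutions $\bm u_1, \bm u_2$, expand $\left< \sigma_n(\bm u_1(t)) - \sigma_n(\bm u_2(t)),\, \jump{\gamma u_{1n}(t) + u_{1n}'(t)} - \jump{\gamma u_{2n}(t) + u_{2n}'(t)} \right>_{\Gamma_c}$ into four pairings: the two diagonal ones vanish by complementarity, while the cross terms $-\left< \sigma_n(\bm u_1(t)), \jump{\gamma u_{2n}(t) + u_{2n}'(t)} \right>_{\Gamma_c}$ and $-\left< \sigma_n(\bm u_2(t)), \jump{\gamma u_{1n}(t) + u_{1n}'(t)} \right>_{\Gamma_c}$ are each $\ge 0$, because condition (iii) in \pref{prop: equivalence} gives $\gamma\bm u_j(t) + \bm u_j'(t) \in \bm K$, hence $\jump{\gamma u_{jn}(t) + u_{jn}'(t)} \ge 0$, while $\sigma_n(\bm u_i(t)) \le 0$ in the sense above. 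The tangential inequality follows likewise: the diagonal terms equal $(g(t), |\jump{\bm u_{i\tau}'(t)}|)_{\Gamma_c}$, the cross terms are bounded below by $-(g(t), |\jump{\bm u_{j\tau}'(t)}|)_{\Gamma_c}$ via $\left< \bm\sigma_\tau(\bm u_i(t)), \jump{\bm v_\tau} \right>_{\Gamma_c} \le (g(t), |\jump{\bm v_\tau}|)_{\Gamma_c}$, and the four contributions sum to zero.

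I expect the only nonroutine point to be the first step — checking that $\sigma_n(\bm u)$ and $\bm\sigma_\tau(\bm u)$ are genuinely well defined as functionals depending only on $\jump{v_n}$, resp.\ $\jump{\bm v_\tau}$ (which rests on $-\operatorname{div}\bm\sigma(\bm u) = \rho\bm f - \rho\bm u'' \in \bm H$, itself a consequence of \eref{eq: VI}, and on a Green formula across $\Gamma_c$), and that every admissible jump datum is the trace of some $\bm v \in \bm V$, for which the field $\bm N$ and the splitting $\bm v = \bar{\bm v} + (\bm v\cdot\bm N)\bm N$ are exactly the needed tools. Once these are available, the extraction of the four relations and the final algebra are straightforward.
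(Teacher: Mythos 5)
Your proposal is correct and follows essentially the same route as the paper: you establish, for each solution $\bm u_i$, the dual-space relations $\left< \sigma_n(\bm u_i), \jump{v_n} \right>_{\Gamma_c} \le 0$ for $\bm v \in \bm K$, the complementarity $\left< \sigma_n(\bm u_i), \jump{\gamma u_{in}+u_{in}'} \right>_{\Gamma_c} = 0$, the bound $\left< \bm\sigma_\tau(\bm u_i), \jump{\bm v_\tau} \right>_{\Gamma_c} \le (g,|\jump{\bm v_\tau}|)_{\Gamma_c}$, and the equality $\left< \bm\sigma_\tau(\bm u_i), \jump{\bm u_{i\tau}'} \right>_{\Gamma_c} = (g,|\jump{\bm u_{i\tau}'}|)_{\Gamma_c}$, and then expand the bilinear pairings into four terms to conclude. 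The paper's proof is just a terser version of this (it cites the argument of Proposition \ref{prop: equivalence} to produce the four relations and leaves the four-term expansion implicit); you fill in the master-inequality and $\bm N$-decomposition details that the paper suppresses, but the underlying mechanism — monotonicity read off through duality pairings — is identical.
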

\begin{proof}
	Arguing in the same way as in \pref{prop: equivalence}, we get
	\begin{equation*}
		\left< \sigma_n(\bm u_i), \jump{v_n} \right>_{\Gamma_c} \le 0 \quad \forall \bm v \in \bm K, \quad\text{and}\quad
		\left< \sigma_n(\bm u_i), \jump{\gamma u_{in} + u_{in}'} \right>_{\Gamma_c} = 0,
	\end{equation*}
	for $i = 1, 2$.
	The first desired inequality follows from these and $\gamma\bm u_i + \bm u_i' \in \bm K$.
	
	Again by the same way as in \pref{prop: equivalence}, we have
	\begin{equation*}
		\left< \bm\sigma(\bm u_i), \jump{\bm v_\tau} \right>_{\Gamma_c} \le (g(t), |\jump{\bm v_\tau}|)_{\Gamma_c} \quad \forall \bm v \in \bm V, \quad\text{and}\quad
		\left< \bm\sigma_\tau(\bm u_i), \jump{\bm u_{i\tau}'} \right>_{\Gamma_c} = (g(t), |\jump{\bm u_{i\tau}'}|)_{\Gamma_c},
	\end{equation*}
	for $i = 1, 2$, which lead to the second desired inequality.
\end{proof}

Now we prove the uniqueness of a solution of \eref{eq: VI}.
Let $\bm u_1, \bm u_2$ be two solutions of \eref{eq: VI} and set $\bm w := \bm u_1 - \bm u_2$.
Then it follows that
\begin{equation*}
	\rho (\bm w''(t), \bm v) + a(\bm w(t), \bm v) + \left< \sigma_n(\bm w(t)), \jump{v_n} \right>_{\Gamma_c} + \left< \sigma_\tau(\bm w(t)), \jump{\bm v_\tau} \right>_{\Gamma_c} = 0 \quad \forall \bm v \in \bm V, \text{ a.e.\ } t \in (0, T).
\end{equation*}
Taking $\bm v = \gamma \bm w(t) + \bm w'(t)$ and using Lemma \ref{lem: monotonicity}, we deduce that
\begin{align*}
	\frac12 \frac{d}{dt} (\rho \|\bm w'(t)\|_{\bm H}^2 + \|\bm w(t)\|_{\bm V}^2) + \rho \gamma (\bm w''(t), \bm w(t)) + \gamma \|\bm w(t)\|_{\bm V}^2
		&\le -\gamma \left< \sigma_\tau(\bm w(t)), \jump{\bm w_\tau(t)} \right>_{\Gamma_c} \\
		&= -\gamma \left< \sigma_\tau(\bm w(t)), \jump{\bar{\bm w}_\tau(t)} \right>_{\Gamma_c} \\
		&= \rho \gamma (\bm w''(t), \bar{\bm w}(t)) + \gamma a(\bm w(t), \bar{\bm w}(t)),
\end{align*}
which, combined with $(\bm w''(t), \bm w(t) - \bar{\bm w}(t)) = (\bm w''(t) \cdot \bm N, \bm w(t) \cdot \bm N)$, gives
\begin{equation*}
	\frac12 \frac{d}{dt} (\rho \|\bm w'(t)\|_{\bm H}^2 + \|\bm w(t)\|_{\bm V}^2) + \rho \gamma (\bm w''(t) \cdot \bm N, \bm w(t) \cdot \bm N) \le \gamma a(\bm w(t), \bar{\bm w}(t)) \le C \gamma \|\bm w(t)\|_{\bm V}^2.
\end{equation*}
Integrate this with respect to $t$ to obtain (note that $\bm w(0) = \bm w'(0) = \bm 0$)
\begin{align*}
	\frac12 (\rho \|\bm w'(t)\|_{\bm H}^2 + \|\bm w(t)\|_{\bm V}^2) + \frac{\rho \gamma}{2} \frac{d}{dt} \|\bm w(t) \cdot \bm N\|_{\bm H}^2 &\le \gamma \int_0^t (\rho \|\bm w'(s) \cdot \bm N\|_{\bm H}^2 + C \|\bm w(s)\|_{\bm V}^2) \, ds \\
		&\le C \gamma \int_0^t (\rho \|\bm w'(s)\|_{\bm H}^2 + \|\bm w(s)\|_{\bm V}^2) \, ds.
\end{align*}
Setting $D(t) := \int_0^t (\rho \|\bm w'(s)\|_{\bm H}^2 + \|\bm w(s)\|_{\bm V}^2) \, ds$, we find from further integration of this estimate that
\begin{equation*}
	D(t) + \rho \gamma \|\bm w(t) \cdot \bm N\|_{\bm H}^2 \le C \gamma \int_0^t D(s) \, ds.
\end{equation*}
By Gronwall's inequality, $D(t) \equiv 0$ and hence $\bm w(t) \equiv \bm 0$, which shows the uniqueness.

The proof of \tref{thm: main} has been completed.


\end{document}